\newcommand{\tfarc}{\tfrac}
\newcommand{\ml}{\mathcal}
\newtheorem{theorem}{Theorem}
\theoremstyle{plain}
\newtheorem{corollary}{Corollary}
\newtheorem{example}{Example}
\newtheorem{proposition}{Proposition}
\newtheorem{remark}{Remark}
\numberwithin{equation}{section}
\numberwithin{example}{section}
\title{$\eta$-Normality, CR-structures, para-CR structures on almost 
contact metric, almost paracontact metric manifolds and they canonical 
connections}
\author{Piotr Dacko}
\subjclass[2010]{Primary:53C15}
\begin{document}
\begin{abstract}
For almost contact metric or almost paracontact metric manifolds there 
is natural notion of $\eta$-normality. Manifold is called $\eta$-normal 
if is normal along kernel distribution of characteristic form. In the 
paper it is proved that $\eta$-normal manifolds are in one-one 
correspondence with Cauchy-Riemann almost contact metric manifolds or 
para Cauchy-Riemann in case of almost paracontact metric manifolds. 
There is provided characterization of $\eta$-normal manifolds in terms 
of Levi-Civita covariant derivative of structure tensor. It is 
established existence a Tanaka-like connection on $\eta$-normal 
manifold with autoparallel Reeb vector field. In particular case 
contact metric CR-manifold it is usual Tanaka connection. Similar 
results are obtained for almost paracontact metric manifolds.
For manifold with closed fundamental form we shall state uniqueness of 
this connection. In the last part is studied bi-Legendrian structure 
of almost paracontact metric manifold with contact characteristic form. 
It is established that such manifold is bi-Legendrian flat if and only 
if is normal. There are characterized semi-flat bi-Legendrian manifolds. 
\end{abstract}

\maketitle

\section{Introduction}
Almost contact metric manifolds are extensively studied in recent years.
 The study focused on contact metric manifolds however there are other 
important classes: almost cosymplectic (or almost coK\"ahler), almost 
Kenmotsu manifolds and their generalizations. For all theses classes 
were obtained similar results, for example classification of  
$(\kappa,\mu)$-spaces of different types. Besides other properties 
every such $(\kappa,\mu)$-space is Cauchy-Riemann manifold. Contact 
metric $(\kappa,\mu)$-space carries a structure of strictly 
pseudo-convex CR-manifold. While almost cosymplectic or almost Kenmotsu 
$(\kappa,\mu)$-spaces are Levi flat CR-manifolds. These results are 
cause of increasing interest in general almost contact metric 
CR-manifolds. General literature on almost contact metric manifolds are 
\cite{Blair}, \cite{CPM:DeNi:Yud}, \cite{Dileo}, \cite{GY}, \cite{O1}. 
 For almost contact metric $(\kappa,\mu)$-spaces cf. \cite{BKP}, 
 \cite{Boe}, \cite{CarMarMol}, \cite{DO2}, \cite{DO3}, \cite{Dil:Pas:2},
 \cite{PS1}. Tanaka \cite{Tanaka} introduced connection on 
non-degenerate real hypersurfce, Tanno \cite{Tan89} generalized Tanaka 
connection for arbitrary contact metric manifold. Similar problem arises
for other classes of almost contact metric manifold. To find natural 
and if possible uniquely determined connections which make almost 
contact metric structure parallel. We provide more detailed study 
of quasi-Sasakian manifolds. It is proven that each such manifold of 
non-maximal rank admits canonical foliations by cosymplectic 
leaves. There is provided family of examples of quasi-Sasakian 
structures on 2-setp nilpotent Lie groups of Heisenberg type.

In analogy to almost contact metric manifolds theory of almost 
paracontact metric manifolds was developed.  There are normal almost 
paracontact metric manifolds, although normality reads 
$[\psi,\psi]-2d\tau\otimes\zeta=0$.
Also appear almost paracontact metric $(\kappa,\mu)$-spaces. Although 
for almost paracontact metric manifolds the problem of classifying 
$(\kappa,\mu)$-spaces is far more difficult and in fact still there is 
no such classification in most interesting cases: contact para-metric, 
almost paracosymplectic or almost para-Kenmotsu.  
Early papers which treated subject in way similar to almost contact 
metric manifolds are \cite{D}, \cite{E}. In \cite{SZ} the author 
classifies almost paracontact  metric structures into classes 
determined by the decompositions of particular $\mathcal G$-module onto 
irreducible components.  For general notion of para-CR manifold, cf. 
\cite{DT}. Particular homogeneous classes of para-CR manifolds are 
studied in \cite{AMT1}, \cite{AMT2}. In framework of almost paracontact  
metric manifolds in \cite{Wel} obtained seve\-ral interesting 
conditions and characterizations for manifold to be para-CR manifold. 
Recently it was found deep relation between contact metric and 
paracontact  metric $(\kappa, \mu)$-spaces \cite{CM}, \cite{CPM:DiTer}. 
General study of paracontact  metric $(\kappa,\mu)$-spaces is provided  
in \cite{CPM:Er:Mur}. Our study of para-CR  manifolds follows the line 
of almost contact metric manifolds. We shall prove analogous 
characterization of para-CR  manifolds.  There are introduced  similar 
concepts: augmented tensor field $h$. Again we shall show existence of 
parallelizations, and uniqueness in case fundamental form is closed.

\section{Preliminaries}
All manifolds in this paper are smooth, connected, without boundary. If 
not otherwise stated we use $X$, $Y$, $Z$, ...  to denote vector fields 
on manifold. As paper treats both almost contact metric and almost 
paracontact metric structures to avoid confusion we use typographical 
convention where $\phi$, $\xi$, $\eta$ refer to almost contact 
structure while $\psi$, $\zeta$, $\tau$ - almost paracontact structure, 
resp.

\subsection{Almost contact metric manifolds}
 
Quadruple of tensor fields $(\phi,\xi,\eta, g)$, where $\phi$ is 
affinor ( $(1,1)$-tensor field), $\xi$ a vector field, $\eta$ is 
one-form, $g$ is Riemannian metric, and 
\begin{align}
& \phi^2 = -Id +\eta\otimes \xi,\quad \eta(\xi) = 1,  \\
& g(\phi X, \phi Y) = g(X,Y)-\eta(X)\eta(Y), 
\end{align}
is called almost contact metric structure. The vector field $\xi$ is 
characteristic vector field or Reeb vector field, form $\eta$ is  
characteristic form. Manifold equipped with fixed almost contact metric 
structure is called almost contact metric manifold. From definition 
tensor field $\Phi(X,Y)=g(X,\phi Y)$ is skew-symmetric -  fundamental 
form of $\mathcal M$ \footnote{In literature some authors define 
fundamental form as $g(\phi X, Y)$.}. There is $\eta \wedge \Phi^n \neq 
0$, on $\mathcal M$.

Set 
\begin{align}
& N^{(1)}(X,Y)=[\phi,\phi](X,Y)+2d\eta(X,Y)\xi,\\ 
& N^{(2)}(X,Y)=(\mathcal L_{\phi X}\eta)(Y)-(\mathcal L_{\phi Y}\eta)(X),
\end{align} 
where $\mathcal L_\xi$ denotes the Lie derivative along vector field 
$\xi$. Let $\nabla$ be covariant derivative with resp. to Levi-Civita 
connection of the metric. We have identity (cf. \cite{Blair})
\begin{align}
\label{gen:cov:div:fi}
 & 2g((\nabla_X\phi)Y,Z) = 3d\Phi(X,\phi Y, \phi Z)-3d\Phi(X,Y,Z) + 
   g(N^{(1)}(Y,Z),\phi X)+ \\
 & \qquad N^{(2)}(Y,Z)\eta(X)+2d\eta(\phi Y, X)\eta(Z)-{}
   2d\eta(\phi Z,X)\eta(Y) \nonumber .
\end{align}

Let $\mathcal D = \{\eta = 0\}$ denote the  kernel distribution of 
$\eta$. Complexification $\mathcal D^{\mathbb C}= \mathcal D'\oplus 
\mathcal D''$ splits into direct sum of complex distributions, 
$\overline{ \mathcal D'} = \mathcal D''$. If $\mathcal D'$ is formally 
involutive pair $(\mathcal M, \mathcal D')$ is called Cauchy-Riemann or 
shortly CR-manifold. Equivalently $\mathcal D'$ is formally involutive 
if and only if for vector fields $X$, $Y$, $\eta(X)=\eta(Y)=0$, there 
is vector field $Z$, $\eta(Z)=0$, such that
\begin{equation}
[X-i\phi X, Y -i\phi Y ] = Z -i\phi Z, 
\end{equation}
The Levi form $L$ of almost contact metric CR-manifold is a conformal 
equivalence class of quadratic form on $\mathcal D$, 
\begin{equation}
 -d\eta(X,\phi X), \quad \eta(X)=0.
\end{equation}
This means that in the above formula $\eta$ can be replaced by its 
multiple $f\eta$, for some smooth function $f$ on $\mathcal M$.  

It is said that almost 
contact metric manifold is $\eta$-normal if 
\begin{equation}
N^{(1)}(X,Y)=0, \quad \eta(X)=\eta(Y)=0.
\end{equation}
So $\eta$-normal manifold is manifold which is normal but only along 
kernel distribution $\{\eta =0\}$. 

\subsection{Almost paracontact  metric manifolds} 

 Almost paracontact  metric structure  is a quadruple of tensor fields 
 $(\psi, \zeta,\tau,g)$, where $\psi$ is affinor, $\zeta$ is a vector 
 field, $\tau$ is a one-form and $g$ is a pseudo-Riemannian metric. It 
 is assumed that 
\begin{align}
& \psi^2 = Id-\tau\otimes \zeta, \quad \tau(\zeta) = 1, \\
& g(\psi X,\psi Y) = -g(X,Y)+\tau(X)\tau(Y).
\end{align}
Let $\mathcal V^{\pm}$ be distributions defined by non-zero eigenvalues 
$\pm 1$ of $\psi$. From  definition dimensions of $\mathcal{V}^{\pm }$  
are equal ${\rm dim}(\mathcal V^+)={\rm dim}(\mathcal V^-)=n$, and both 
$\mathcal V^\pm$ are totally isotropic $g(\mathcal V^+,\mathcal 
V^+)=g(\mathcal V^-,\mathcal V^-)=0$.  In conclusion  pseudo-metric $g$ 
has signature $(n+1,n)$. The triple $(\psi,\zeta,\tau)$ is called 
almost paracontact  structure, tensor field $\Psi(X,Y)=g(X,\psi Y)$ is 
called  fundamental form, $\tau\wedge \Psi^n \neq 0$, everywhere on 
$\mathcal M$. Manifold equipped with fixed almost paracontact  metric 
structure is called almost paracontact  metric manifold. Almost 
paracontact  metric manifold is  para-CR manifold if eigendistributions 
$\mathcal V^\pm$, are involutive
\begin{equation}
[\mathcal V^+,\mathcal V^+] \subset \mathcal V^+, \quad 
[\mathcal V^-,\mathcal V^-] \subset \mathcal V^-.
\end{equation}

Almost paracomplex structure $J$  is $(1,1)$-tensor field satisfying 
$J^2 = Id$, and eigendistribution corresponding to eigenvalues $\pm 1$ 
are of the same dimensions. Almost paracomplex structure is said to be 
integrable if there is atlas, and in every local chart coefficients of 
$J$ are constants. It is known that sufficient and necessary condition 
for paracomplex structure $J$ to be integrable is vanishing Nijenhuis 
torsion $[J,J]=0$. In fact it is particular case of general Walker 
theorem for almost product structures.

Let $K^{(1)}= [\psi,\psi]-2d\tau\otimes \zeta$. On product $\mathcal 
M\times \mathbb S^1$ with circle there is naturally  defined almost 
paracomplex structure $J$, 
\begin{align}
\label{def:pJ}
& J(X, f\frac{d}{dt}) = (\psi X+f\zeta, \tau(X)\frac{d}{dt}),
\end{align}
 If this structure is paracomplex manifold $\mathcal M$ is said to be 
 normal. It is known that  $\mathcal M$ is normal if and only if 
 $K^{(1)} = 0$. For reader convenience we shall provide the proof of 
 this result, cf. {\bf Section \ref{sec:pcm}}.  Similarly almost 
 paracontact  metric manifold is $\tau$-normal if is normal along 
 di\-stribution $\{\tau=0\}$,
\begin{align}
K^{(1)}(X,Y)=0, \quad \tau(X)=\tau(Y)=0.
\end{align}

\section{Almost contact metric CR-manifolds and $\eta$-normal manifolds}

In this section we shall  prove  following result 

\begin{theorem}
\label{main:acm}
For almost contact metric manifold $\mathcal M$ the following 
conditions are equivalent:
\begin{enumerate}
\item[1.]
$\mathcal M$ is $\eta$-normal; 
\item[2.]
$\mathcal M$ is Cauchy-Riemann manifold; 
\item[3.]
Set $u(Y,X) = d\eta(\phi Y, X)+g(h Y,X)$. Then 
\begin{align}
\label{covd}
& g((\nabla_X\phi)Y,Z)= \tfrac{3}{2}d\Phi(X,\phi Y, \phi Z)-
\tfrac{3}{2}d\Phi(X,Y,Z) +  u(Y,X)\eta(Z) - \\ 
& \qquad u(Z,X)\eta(Y), \nonumber
\end{align}
\end{enumerate}
\end{theorem}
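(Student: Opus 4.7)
The plan is to establish the three-way equivalence in two stages: $(1)\Leftrightarrow(2)$ by a direct bracket computation on the horizontal distribution $\mathcal D$, and $(1)\Leftrightarrow(3)$ by comparing \eqref{covd} against the general identity \eqref{gen:cov:div:fi}. For the second stage, dividing \eqref{gen:cov:div:fi} by $2$ and subtracting \eqref{covd} isolates a single discrepancy
\begin{equation*}
\mathcal E(X,Y,Z)=\tfrac{1}{2}g\bigl(N^{(1)}(Y,Z),\phi X\bigr)+\tfrac{1}{2}N^{(2)}(Y,Z)\eta(X)-g(hY,X)\eta(Z)+g(hZ,X)\eta(Y),
\end{equation*}
and the task reduces to showing that $\mathcal E\equiv 0$ is equivalent to $\eta$-normality.

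For $(1)\Leftrightarrow(2)$ I would expand $[X-i\phi X,Y-i\phi Y]=\bigl([X,Y]-[\phi X,\phi Y]\bigr)-i\bigl([\phi X,Y]+[X,\phi Y]\bigr)$ for $X,Y\in\mathcal D$ and read off that involutivity of $\mathcal D'$ amounts to (a) $[X,Y]-[\phi X,\phi Y]\in\mathcal D$ together with (b) $\phi\bigl([X,Y]-[\phi X,\phi Y]\bigr)=[\phi X,Y]+[X,\phi Y]$. Using $\phi^2=-I$ on $\mathcal D$ and $2d\eta(\cdot,\cdot)=-\eta\circ[\cdot,\cdot]$ on $\mathcal D$, the Nijenhuis tensor $N^{(1)}$ on $\mathcal D\times\mathcal D$ simplifies to $[\phi X,\phi Y]-[X,Y]-\phi\bigl([\phi X,Y]+[X,\phi Y]\bigr)$; its $\mathcal D$-part is controlled by (b), while its $\xi$-component $2(d\eta(X,Y)-d\eta(\phi X,\phi Y))\xi$ is controlled by (a). The implication $(2)\Rightarrow(1)$ then follows by applying $\phi$ to (b) and using (a); conversely, from $N^{(1)}(X,Y)=0$ one reads (a) off the $\xi$-component and recovers (b) after deriving $N^{(2)}|_{\mathcal D}=0$ via the substitution $X\mapsto\phi X$.

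For $(1)\Leftrightarrow(3)$ I would test $\mathcal E\equiv 0$ on specialized triples. When $\eta(Y)=\eta(Z)=0$ the $h$-terms disappear; setting $X=\xi$ forces $N^{(2)}(Y,Z)=0$, while letting $X$ range over $\mathcal D$ forces $N^{(1)}(Y,Z)\perp\mathcal D$. The substitution $Y\mapsto\phi Y$ in $N^{(2)}(Y,Z)=2d\eta(\phi Y,Z)-2d\eta(\phi Z,Y)$ converts $N^{(2)}|_{\mathcal D}=0$ into $d\eta(Y,Z)=d\eta(\phi Y,\phi Z)$, which is precisely the $\xi$-component of $N^{(1)}|_{\mathcal D}$; combined with $N^{(1)}(Y,Z)\perp\mathcal D$ this gives full $\eta$-normality. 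Conversely, $\eta$-normality makes the horizontal piece of $\mathcal E$ vanish. The mixed case ($Z=\xi$, $Y\in\mathcal D$ and its antisymmetric partner) is handled by the algebraic identities
\begin{equation*}
N^{(1)}(Y,\xi)=2\phi hY+2d\eta(Y,\xi)\xi,\qquad N^{(2)}(Y,\xi)=2\eta(hY),
\end{equation*}
valid for $h=\tfrac{1}{2}\mathcal L_\xi\phi$, obtained by unpacking $[\phi,\phi](Y,\xi)$ with $\phi\xi=0$ and $[\xi,\phi Y]-\phi[\xi,Y]=2hY$. Feeding these into $\mathcal E$ makes the $\xi$-slot contribution cancel identically, imposing no new condition.

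The main obstacle is ensuring that \eqref{covd}, being an identity in three arguments, encodes \emph{exactly} the horizontal vanishing of $N^{(1)}$ and nothing stronger. The absorption identities for $N^{(1)}(\cdot,\xi)$ and $N^{(2)}(\cdot,\xi)$ are the crucial algebraic fact: they repackage the $\xi$-slot behaviour of the Nijenhuis tensors entirely into $h$, so that the $\eta(Y)$- and $\eta(Z)$-terms of $\mathcal E$ collapse automatically and only the horizontal obstruction $N^{(1)}|_{\mathcal D\times\mathcal D}=0$ survives. Without these identifications, \eqref{covd} would appear strictly stronger than $\eta$-normality.
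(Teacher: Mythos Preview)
Your proposal is correct and follows the same two-stage architecture as the paper (Proposition~1 for $(1)\Leftrightarrow(2)$, Proposition~\ref{covd::enor} for $(1)\Leftrightarrow(3)$). The bracket computation for $(1)\Leftrightarrow(2)$ is essentially identical to the paper's.

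For $(1)\Leftrightarrow(3)$ the executions differ. The paper, in the forward direction, substitutes $\bar Y=Y-\eta(Y)\xi$, $\bar Z=Z-\eta(Z)\xi$ into \eqref{gen:cov:div:fi}, kills $N^{(1)},N^{(2)}$ on horizontal arguments, and then expands both sides using \eqref{lxiFi} to recover \eqref{covd}. In the backward direction it takes a detour: it first proves the antisymmetry identity \eqref{h:anti}, then plugs \eqref{covd} into the cyclic sum \eqref{3dFi:csum} to extract $d\eta(\phi Y,Z)=d\eta(\phi Z,Y)$ (i.e.\ $N^{(2)}|_{\mathcal D}=0$), and only afterwards compares with \eqref{gen:cov:div:fi}. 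Your route is more direct: you isolate the discrepancy tensor $\mathcal E$ once, read off $N^{(2)}|_{\mathcal D}=0$ immediately by setting $X=\xi$ (using $\phi\xi=0$), and handle the mixed slots via the closed-form identities $N^{(1)}(Y,\xi)=2\phi hY+2d\eta(Y,\xi)\xi$, $N^{(2)}(Y,\xi)=2\eta(hY)$. This avoids both \eqref{h:anti} and \eqref{3dFi:csum}; the price is that you must supply and verify those $\xi$-slot identities, which the paper never states explicitly. Either organization works; yours makes the logical structure (that \eqref{covd} is exactly \eqref{gen:cov:div:fi} with the $\xi$-behaviour of the Nijenhuis tensors absorbed into $h$) more transparent.
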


Here are some simple 
useful identities we use in proof of the above theorem, 
$\nabla$ denotes Levi-Civita covariant derivative operator
\begin{align}
& 2g(\nabla_X\xi, Y) = 2d\eta(X,Y)+(\mathcal L_\xi g)(X,Y), \\
& g((\nabla_X\phi)\xi,Y) = g(\nabla_X\xi,\phi Y) = d\eta(X,\phi Y)+
   \tfrac{1}{2} (\mathcal L_\xi g)(X,\phi Y),\\
& 3d\Phi(\xi,X,Y) = (\mathcal L_\xi \Phi)(X,Y), \\
& (\mathcal L_\xi \Phi)(X,Y) + (\mathcal L_\xi g)(\phi X, Y) = 
  - g ((\mathcal L_\xi\phi)X,Y) = -2g(hX,Y) \label{lxiFi}, \\
\label{n2::deta}
& N^{(2)}(X,Y) = 2 d\eta(\phi X,Y)-2d\eta(\phi Y,X), 
  \quad \eta(X)=\eta(Y)=0.
\end{align}

\begin{proposition}
Manifold is $\eta$-normal if and only is a CR-manifold. 
\end{proposition}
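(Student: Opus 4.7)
The plan is to introduce, for $X,Y$ in the kernel distribution $\mathcal{D} = \{\eta=0\}$, the vector fields $A(X,Y) := [X,Y] - [\phi X, \phi Y]$ and $B(X,Y) := [\phi X, Y] + [X, \phi Y]$, so that $[X-i\phi X,\,Y-i\phi Y] = A - iB$. Because $\mathcal{D}$ is $\phi$-invariant and $\phi^{2}|_{\mathcal{D}} = -\mathrm{Id}$, a direct substitution shows $A(\phi X, Y) = B(X,Y)$ and $B(\phi X, Y) = -A(X,Y)$; this symmetry will be the key tool for shuffling $\xi$-components at the end.

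First I translate the CR condition. A section of $\mathcal{D}'$ has the form $X - i\phi X$ with $X \in \mathcal{D}$, and formal involutivity is equivalent to $\phi[X-i\phi X, Y-i\phi Y] = i\,[X-i\phi X, Y-i\phi Y]$. Separating into real and imaginary parts yields $\phi A = B$ and $\phi B = -A$; using $\phi^{2} = -\mathrm{Id} + \eta \otimes \xi$, the second follows from the first together with $\eta(A) = 0$, so the CR condition reduces to the pair $\phi A(X,Y) = B(X,Y)$ and $\eta(A(X,Y)) = 0$ for all $X,Y \in \mathcal{D}$.

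Next I compute $N^{(1)}$ restricted to $\mathcal{D}\times\mathcal{D}$. Expanding $[\phi,\phi](X,Y) = \phi^{2}[X,Y] + [\phi X, \phi Y] - \phi[\phi X, Y] - \phi[X, \phi Y]$ and substituting $\phi^{2} = -\mathrm{Id} + \eta \otimes \xi$ together with $\eta([X,Y]) = -2d\eta(X,Y)$ (valid on $\mathcal{D}$), I obtain $[\phi, \phi](X,Y) = -A - 2d\eta(X,Y)\xi - \phi B$, hence $N^{(1)}(X,Y) = -A - \phi B$ on $\mathcal{D}\times\mathcal{D}$. Since $\phi B \in \mathcal{D}$, splitting into horizontal and vertical components makes this vanish if and only if $\eta(A) = 0$ and $A + \phi B = 0$.

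Finally I match the two characterizations. Assume $\eta$-normality: then $A = -\phi B$ and $\eta(A) = 0$; applying $\phi$ gives $\phi A = -\phi^{2}B = B - \eta(B)\xi$, and I still need to kill the $\eta(B)\xi$ term. By the symmetry noted in the first paragraph, $\eta(B(X,Y)) = \eta(A(\phi X, Y))$, which vanishes by applying $\eta$-normality to the pair $(\phi X, Y) \in \mathcal{D} \times \mathcal{D}$; hence $\phi A = B$ and the CR condition holds. Conversely, CR gives $\phi A = B$ and $\eta(A) = 0$; applying $\phi$ and using $\eta(A) = 0$ yields $-A = \phi B$, which together with $\eta(A) = 0$ is precisely the $\eta$-normality characterization. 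The only delicate point is this last $\xi$-bookkeeping, and the $\phi$-invariance of $\mathcal{D}$ does all the work.
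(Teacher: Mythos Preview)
Your proof is correct and follows essentially the same route as the paper: both identify $N^{(1)}(X,Y) = -A - \phi B$ on $\mathcal D\times\mathcal D$ and translate the CR condition into $\phi A = B$, $\eta(A)=0$, then use the $\phi$-invariance of $\mathcal D$ to control the $\xi$-components. Your explicit use of the identity $B(X,Y)=A(\phi X,Y)$ is exactly how the paper obtains its equation $\eta([X,\phi Y]+[\phi X,Y])=0$ (by substituting $\phi X$ for $X$ in $\eta(A)=0$), so the two arguments are the same up to presentation.
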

\begin{proof}
Let $\eta(X)=0$, $\eta(Y)=0$. For $\eta$-normal manifold
\begin{align}
\label{e-nor}
& [X,Y] - [\phi X,\phi Y] = -\phi ([\phi X,Y]+[X,\phi Y]), \\
\label{eta:fix:fiy}
&  \eta([X,Y]-[\phi X,\phi Y])=0, \\
\label{eta:x:fiy}
&  \eta([X,\phi Y]+[\phi X,Y]) =0,
\end{align}
therefore
\begin{align}
\label{fZ}
& \phi ([X,Y]-[\phi X,\phi Y]) = [\phi X, Y]+[X,\phi Y].
\end{align}
Set $Z=[X,Y]-[\phi X,\phi Y]$, $\eta(Z)=0$ by (\ref{eta:fix:fiy}), and 
the above identity implies 
\begin{align}
[X-i\phi X, Y - i \phi Y] = Z- i \phi Z,
\end{align}
so $(\mathcal M, \phi |_{\mathcal D})$ is a CR-manifold.

Conversely if manifold is CR-manifold (\ref{fZ}) is satisfied, 
consequently (\ref{e-nor}), so $\mathcal M$ is $\eta$-normal. 
\end{proof}

\begin{corollary}
For $\eta$-normal manifold 
\begin{align}
& N^{(2)}(X,Y)= 0,\quad \eta(X)= \eta(Y)=0, \\
&  d\eta(\phi X,Y)-d\eta(\phi Y,X) = 0, \quad \eta(X)=\eta(Y)=0.
\end{align}
Note these two above identities in virtue of (\ref{n2::deta}) are 
equivalent.
\end{corollary}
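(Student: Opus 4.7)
The plan is to deduce both statements essentially for free from the preceding proposition together with the already listed identity (\ref{n2::deta}). The equivalence assertion at the end of the corollary is immediate: formula (\ref{n2::deta}) expresses $N^{(2)}(X,Y)$ on the kernel of $\eta$ as $2d\eta(\phi X, Y) - 2d\eta(\phi Y, X)$, so vanishing of either side of the equation forces vanishing of the other. Thus only one of the two displayed identities needs to be proved directly.

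For the vanishing of $N^{(2)}$ on horizontal vectors, I would expand
\[
N^{(2)}(X,Y) = (\mathcal L_{\phi X}\eta)(Y) - (\mathcal L_{\phi Y}\eta)(X)
\]
using the standard formula $(\mathcal L_Z \eta)(W) = Z(\eta(W)) - \eta([Z,W])$. Under the hypothesis $\eta(X) = \eta(Y) = 0$, the two derivation terms $(\phi X)(\eta(Y))$ and $(\phi Y)(\eta(X))$ vanish identically, leaving
\[
N^{(2)}(X,Y) = -\eta([\phi X, Y]) + \eta([\phi Y, X]) = -\eta\bigl([\phi X, Y] + [X, \phi Y]\bigr).
\]
The preceding proposition already extracted (\ref{eta:x:fiy}) as a direct consequence of $\eta$-normality, and this is exactly the statement that the right-hand side is zero. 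The second identity of the corollary then follows by dividing the first by $2$ and applying (\ref{n2::deta}).

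There is effectively no obstacle: the real content has been absorbed into the proof of the proposition, and the corollary's role is simply to record the consequence in two equivalent forms. The only minor point worth flagging is that the derivation step uses the Cartan-type expression for $\mathcal L_Z \eta$ on a $1$-form, which is valid regardless of whether $\eta$ is closed, so no additional integrability hypothesis is smuggled in.
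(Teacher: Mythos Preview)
Your proposal is correct and matches the paper's implicit argument: the corollary is stated without proof precisely because it follows immediately from (\ref{eta:x:fiy}) in the preceding proposition together with the identity (\ref{n2::deta}), exactly as you describe. Your expansion of $N^{(2)}$ via the Lie derivative formula to land on $-\eta([\phi X,Y]+[X,\phi Y])$ is the intended (and only natural) step.
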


\begin{proposition} 
\label{covd::enor}
Almost contact metric manifold is $\eta$-normal if and only if
(\ref{covd}) holds
\end{proposition}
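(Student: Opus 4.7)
The plan is to compare the specialized identity (\ref{covd}) with the general formula (\ref{gen:cov:div:fi}) for $2g((\nabla_X\phi)Y,Z)$. Multiplying (\ref{covd}) by two and subtracting from (\ref{gen:cov:div:fi}), the two formulas become equivalent precisely when the single auxiliary identity
\begin{equation}
\label{aux:cancel}
g(N^{(1)}(Y,Z),\phi X)+N^{(2)}(Y,Z)\eta(X)=2g(hY,X)\eta(Z)-2g(hZ,X)\eta(Y)
\end{equation}
holds for all vector fields $X,Y,Z$, where $2h=\mathcal{L}_\xi\phi$ as in (\ref{lxiFi}). Thus the proof reduces to establishing (\ref{aux:cancel}) assuming $\eta$-normality, and conversely recovering $\eta$-normality from it.

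For the forward implication I would decompose $Y=Y_0+\eta(Y)\xi$ and $Z=Z_0+\eta(Z)\xi$ with $Y_0,Z_0\in\ker\eta$. By bilinearity and skew-symmetry of $N^{(1)}$, the hypothesis $N^{(1)}(Y_0,Z_0)=0$, and the trivial $N^{(1)}(\xi,\xi)=0$, one has $N^{(1)}(Y,Z)=\eta(Y)N^{(1)}(\xi,Z_0)-\eta(Z)N^{(1)}(\xi,Y_0)$. A short direct calculation using $\phi\xi=0$ and $2h=\mathcal{L}_\xi\phi$ yields $N^{(1)}(\xi,W)=-2\phi hW+(\mathcal{L}_\xi\eta)(W)\xi$. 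Substituting, and using $g(\phi A,\phi B)=g(A,B)-\eta(A)\eta(B)$ together with $g(\xi,\phi X)=0$, the expansion of $g(N^{(1)}(Y,Z),\phi X)$ reproduces the right-hand side of (\ref{aux:cancel}) up to a remainder of the form $2\eta(X)\bigl[\eta(Y)\eta(hZ)-\eta(Z)\eta(hY)\bigr]$.

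This remainder must be cancelled by $N^{(2)}(Y,Z)\eta(X)$. Here I would use the everywhere-valid $N^{(2)}(Y,Z)=2d\eta(\phi Y,Z)-2d\eta(\phi Z,Y)$ (obtained by direct computation, generalising (\ref{n2::deta})), the identity $\eta\circ h=-\tfrac{1}{2}(\mathcal{L}_\xi\eta)\circ\phi$ obtained by Lie-differentiating $\eta\circ\phi=0$ along $\xi$, the relation $(\mathcal{L}_\xi\eta)(W)=2d\eta(\xi,W)$, and finally the preceding corollary stating the symmetry of $d\eta(\phi\,\cdot,\,\cdot)$ on $\ker\eta$. Splitting off the $\xi$-components of $Y$ and $Z$ in $d\eta(\phi Y,Z)-d\eta(\phi Z,Y)$ reduces it to $-\eta(Z)d\eta(\xi,\phi Y)+\eta(Y)d\eta(\xi,\phi Z)$, which exactly cancels the remainder.

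For the converse, assume (\ref{aux:cancel}) and take $Y,Z\in\ker\eta$, so its right-hand side vanishes. Evaluating at $X\in\ker\eta$ gives $g(N^{(1)}(Y,Z),\phi X)=0$; since $\phi$ restricts to a bijection on $\ker\eta$ and $\ker\eta$ is the $g$-orthogonal complement of $\mathbb{R}\xi$ (from $\eta=g(\xi,\cdot)$), this forces $N^{(1)}(Y,Z)=c\xi$ for some scalar $c$. Taking $X=\xi$ in (\ref{aux:cancel}) yields $N^{(2)}(Y,Z)=0$, and substituting $\phi Y$ for $Y$ in this identity, using $\phi^2=-Id$ on $\ker\eta$, gives $\eta([\phi Y,\phi Z])=\eta([Y,Z])$. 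Since $\eta(N^{(1)}(Y,Z))=\eta([\phi Y,\phi Z])-\eta([Y,Z])$ for $Y,Z\in\ker\eta$, we get $c=0$, hence $N^{(1)}(Y,Z)=0$, and the manifold is $\eta$-normal. The main obstacle is the bookkeeping in the forward direction: seeing that the $\eta(X)$-only remainder cancels exactly against $N^{(2)}(Y,Z)\eta(X)$ requires invoking the identities for $\eta\circ h$ and the symmetry of $d\eta(\phi\,\cdot,\,\cdot)$ on $\ker\eta$ in just the right combination.
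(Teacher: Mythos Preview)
Your argument is correct, and the reduction to the single auxiliary identity (\ref{aux:cancel}) is a clean way to organise the comparison between (\ref{gen:cov:div:fi}) and (\ref{covd}); the forward-direction bookkeeping (the $N^{(1)}(\xi,W)=-2\phi hW+(\mathcal L_\xi\eta)(W)\xi$ computation and the cancellation via $\eta\circ h=-\tfrac12(\mathcal L_\xi\eta)\circ\phi$) checks out, and your converse is valid.

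The paper proceeds differently. In the forward direction it never touches $N^{(1)}(\xi,\cdot)$: instead it sets $\bar Y=Y-\eta(Y)\xi$, $\bar Z=Z-\eta(Z)\xi$, applies (\ref{gen:cov:div:fi}) directly to $\bar Y,\bar Z$ (where the $N^{(1)}$ and $N^{(2)}$ terms drop by hypothesis), and then recovers (\ref{covd}) by expanding the difference between $g((\nabla_X\phi)\bar Y,\bar Z)$ and $g((\nabla_X\phi)Y,Z)$ using $\mathcal L_\xi g$, $\mathcal L_\xi\Phi$ and (\ref{lxiFi}). In the converse it does not read off $N^{(2)}=0$ from your (\ref{aux:cancel}) with $X=\xi$; rather it takes $\eta(Y)=\eta(Z)=0$, feeds (\ref{covd}) into the cyclic identity $-3d\Phi(\xi,Y,Z)=g((\nabla_\xi\phi)Y,Z)+g((\nabla_Y\phi)Z,\xi)+g((\nabla_Z\phi)\xi,Y)$ together with (\ref{h:anti}), and obtains $d\eta(\phi Y,Z)=d\eta(\phi Z,Y)$ that way, only then comparing with (\ref{gen:cov:div:fi}). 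Your route is more elementary in the converse (no cyclic sum, no (\ref{h:anti})), at the cost of a slightly more delicate cancellation in the forward direction; the paper's route trades that cancellation for the Lie-derivative identities (\ref{lxiFi}), (\ref{h:anti}).
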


\begin{proof} 
Set $\bar Y = Y-\eta(Y)\xi$, $\bar Z=Z-\eta(Z)$, observe  
$\phi \bar Y = \phi Y$, $\phi \bar Z = \phi Z$, 
$\eta(\bar Y)=\eta(\bar Z)=0$. If manifold is $\eta$-normal 
$N^{(1)}(\bar Y,\bar Z)=0$,  $N^{(2)}(\bar Y,\bar Z)=0$, by 
(\ref{gen:cov:div:fi})
\begin{align}
\label{cov:div:fi:bar}
& 2g((\nabla_X\phi)\bar Y,\bar Z) = 3d\Phi(X,\phi Y, \phi Z)-
  3 d\Phi(X,\bar Y, \bar Z), \\
\label{lh}
& 2g((\nabla_X\phi)\bar Y,\bar Z) = 2g((\nabla_X\phi)Y,Z) +  
  \eta(Y)\{2d\eta(\phi Z,X)-(\mathcal L_\xi g)(\phi Z,X)\} +  \\ 
& \qquad -\eta(Z)\{2d\eta(\phi Y,X)-(\mathcal L_\xi g)(\phi Y,X)\}, 
   \nonumber \\
\label{rh}
& 3d\Phi(X,\phi Y, \phi Z)-3 d\Phi(X,\bar Y, \bar Z) = 
    3d\Phi(X,\phi Y, \phi Z) -  3d\Phi(X,Y,Z) + \\ 
& \qquad \eta(Y)(\mathcal L_\xi\Phi)(Z,X)- 
    \eta(Z)(\mathcal L_\xi\Phi)(Y,X),\nonumber
\end{align}
all together these three above identities in view of (\ref{lxiFi}) 
yield (\ref{covd}).

Conversely. Let $\eta(Y)=\eta(Z)=0$. By
\begin{align*}
& 2g(hY,Z)= g((\mathcal L_\xi\phi)Y,Z) = -(\mathcal L_\xi \Phi)(Y,Z)-
 (\mathcal L_\xi g)(\phi Y,Z) ,\\
& 2g(h Z,Y) = - 2g(h \phi Z, \phi Y) = 
  (\mathcal L_\xi \Phi)(\phi Z,\phi Y) - (\mathcal L_\xi g)( Z,\phi Y)
\end{align*}
we obtain
\begin{align}
\label{h:anti}
& g(hY,Z)-g(hZ,Y) = -\tfrac{1}{2}(\mathcal L_\xi\Phi)(Y,Z)+{}
   \tfrac{1}{2}(\mathcal L_\xi\Phi)(\phi Y,\phi Z).
\end{align}
To each term of the left hand of the identity
\begin{align}
\label{3dFi:csum}
-3d\Phi(\xi, Y,Z) = g((\nabla_\xi\phi)Y,Z)+g((\nabla_Y\phi)Z,\xi) + 
   g((\nabla_Z\phi)\xi,Y), 
\end{align}
we apply (\ref{covd}), next (\ref{h:anti}), 
in  result  equation above simplifies  to
\begin{align*}
-3d\Phi(\xi,Y,Z) = -3 d\Phi(\xi,Y,Z)+d\eta(\phi Z,Y)-d\eta(\phi Y,Z).
\end{align*}
Therefore $d\eta(\phi Y,Z)-d\eta(\phi Z,Y)=0$, equivalently  
$N^{(2)}(Y,Z)=0$. Let $X$ be arbitrary vector field. By 
(\ref{gen:cov:div:fi}), (\ref{covd}), and $N^{(2)}(Y,Z)=0$, we find 
$g(N^{(1)}(Y,Z),\phi X)=0$, in consequence $N^{(1)}(Y,Z)= 0$, and 
manifold is $\eta$-normal.
\end{proof}

\begin{corollary}
For almost contact metric manifold $\mathcal M$ the following 
conditions are equi\-valent:
\begin{enumerate}
\item
$\mathcal M$ is CR-manifold and tensor field $h$ vanishes, $h=0$;
\item
$\mathcal M$ is $\eta$-normal and tensor field $h$ vanishes, $h=0$;
\item
$\mathcal M$ is normal.
\end{enumerate}
\end{corollary}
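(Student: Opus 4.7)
The equivalence (1) $\Leftrightarrow$ (2) is immediate from the proposition already proved above, so the whole content of the corollary lies in (2) $\Leftrightarrow$ (3). Since $\eta$-normality is precisely the vanishing of $N^{(1)}$ on $\mathcal D\times\mathcal D$ while normality demands $N^{(1)}\equiv 0$, the task reduces by $\mathbb R$-bilinearity and the splitting $TM = \mathcal D\oplus\mathbb R\xi$ to understanding the ``$\xi$-component'' $N^{(1)}(\xi,X)$ for arbitrary $X$.

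The plan is to establish two key identities. First, since $\phi\xi = 0$, the Nijenhuis bracket collapses to
\begin{equation*}
[\phi,\phi](\xi,X) = \phi^2[\xi,X] - \phi[\xi,\phi X] = -\phi(\mathcal L_\xi\phi) X = -2\phi hX,
\end{equation*}
where $\mathcal L_\xi\phi = 2h$ is read off from (\ref{lxiFi}); combining with the definition of $N^{(1)}$ gives
\begin{equation*}
N^{(1)}(\xi,X) = -2\phi hX + 2 d\eta(\xi,X)\xi.
\end{equation*}
Second, applying $\eta$ to $(\mathcal L_\xi\phi)X = [\xi,\phi X]-\phi[\xi,X]$ and using $\eta\circ\phi = 0$ together with the Cartan formula $2 d\eta(\xi,\phi X) = -\eta([\xi,\phi X])$ yields the auxiliary identity $\eta(hX) = -d\eta(\xi,\phi X)$.

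With these two identities in hand both implications unfold algebraically. For (2) $\Rightarrow$ (3), assume $\eta$-normality and $h=0$; the first summand of $N^{(1)}(\xi,X)$ vanishes, and substituting $\phi X$ for $X$ in the auxiliary identity together with $\phi^2 = -Id+\eta\otimes\xi$ forces $d\eta(\xi,X)=0$, so $N^{(1)}\equiv 0$. For (3) $\Rightarrow$ (2), normality trivially gives $\eta$-normality, while the $\mathcal D$- and $\mathbb R\xi$-components of $N^{(1)}(\xi,X) = 0$ decouple into $\phi hX = 0$ and $d\eta(\xi,X) = 0$; the latter forces $\eta(hX) = 0$ through the auxiliary identity, and applying $\phi$ to $\phi hX = 0$ gives $-hX + \eta(hX)\xi = 0$, hence $hX = 0$. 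The main obstacle is precisely the auxiliary identity $\eta(hX) = -d\eta(\xi,\phi X)$: without it, $\phi hX = 0$ would only pin $hX$ to the line $\mathbb R\xi$, and $h=0$ alone would not visibly kill $d\eta(\xi,\cdot)$, so the equivalence between the two terms in $N^{(1)}(\xi,X)$ and the single condition $h=0$ would fail to close.
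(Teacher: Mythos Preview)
The paper states this corollary without proof, treating it as an immediate consequence of the preceding results. Your argument is correct and supplies the omitted details: the equivalence $(1)\Leftrightarrow(2)$ is indeed Theorem~\ref{main:acm}, and your direct computation of $N^{(1)}(\xi,X) = -2\phi hX + 2d\eta(\xi,X)\xi$ together with the auxiliary identity $\eta(hX) = -d\eta(\xi,\phi X)$ cleanly handles $(2)\Leftrightarrow(3)$. Note that the paper records this auxiliary identity in the equivalent form $2\eta(hX) = -(\mathcal L_\xi\eta)(\phi X)$ at~(\ref{2etah::etanfi}), since $(\mathcal L_\xi\eta)(Y) = 2d\eta(\xi,Y)$ under the coboundary convention used here. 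The decoupling of the $\mathcal D$- and $\mathbb R\xi$-components in $(3)\Rightarrow(2)$ is valid because $\eta\circ\phi = 0$ forces $\phi hX\in\mathcal D$, and the substitution $X\mapsto\phi X$ in $(2)\Rightarrow(3)$ correctly extracts $d\eta(\xi,X)=0$ from $h=0$.
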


With help of (\ref{covd}), we shall find out covariant derivatives of 
$\phi$, for some classes of almost contact metric manifolds to compare 
with already known results.

\begin{example}[Contact metric CR-manifolds] For contact metric CR-manifold 
$d\eta=\Phi$, $d\Phi =0$, tensor field $h$ is symmetric 
$g(hX,Y)=g(hY,X)$. By (\ref{covd}) 
\begin{align}
& (\nabla_X\phi)Y= g(X+hX,Y)\xi-\eta(Y)(X+hX),
\end{align}
 cf. \cite{Blair},  p. 74,  Theorem 6.6.
\end{example}

\begin{example}[Almost cosymplectic CR-manifolds] 
For almost cosymplectic mani\-fold $d\eta =0$, $d\Phi=0$, again $h$ is 
symmetric, let $AX= -\nabla_X\xi$, $A$ is symmetric, moreover  
$h=-\phi A$. By (\ref{covd}) for almost cosymplectic CR-manifold
\begin{align}
& (\nabla_X\phi)Y =-g(\phi A X,Y)\xi +\eta(Y)\phi AX.
\end{align}
Therefore almost cosymplectic manifold is CR-manifold if and only if 
manifold has K\"ahlerian leaves, cf. \cite{O3}. 
\end{example}

\begin{example}[Almost Kenmotsu CR-manifolds] 
For almost Kenmotsu manifold $d\eta=0$, $d\Phi = 2\eta\wedge \Phi$, 
tensor $h$ is symmetric. By (\ref{covd}) 
\begin{align}
& (\nabla_X\phi)Y= g(\phi X+hX,Y)\xi -\eta(Y)(\phi X+hX).
\end{align}
Therefore almost Kenmotsu manifold is CR-manifold if and only if leaves 
of the distribution $\{\eta = 0\}$  are K\"ahler, cf. \cite{Dileo}, 
\cite{Salt}. 
\end{example}

We need following identities true for every almost contact metric 
manifold 
\begin{align}
& h\phi +\phi h = \tfrac{1}{2}\mathcal L_\xi \eta \otimes \xi,  \\
\label{2etah::etanfi}
& 2\eta(hX) = \eta((\nabla_\xi\phi)X)= -g(\nabla_\xi\xi,\phi X)= 
  - (\mathcal L_\xi \eta)(\phi X). 
\end{align}
From these identities we obtain set of equivalent conditions
\begin{align}
h \phi + \phi h = 0\; \leftrightarrow \;\mathcal L_\xi\eta = 0 \;{}
  \leftrightarrow \eta\circ h =0\; \leftrightarrow 
  \nabla_\xi\xi =0.
\end{align}
Let assume $h\phi +\phi h =0$, equivalently $\nabla_\xi\xi$, or 
$\mathcal L_\xi\eta = 0$, or $\eta(hX)=0$, under this assumption 
identity  (\ref{h:anti})
\begin{align}
& g(hY,Z)-g(hZ,Y) = -\tfrac{1}{2}(\mathcal L_\xi\Phi)(Y,Z) + 
\tfrac{1}{2}(\mathcal L_\xi\Phi)(\phi Y,\phi Z),
\end{align}
is satisfied for ever vector fields $Y$,$Z$, then  by (\ref{covd}) 
\begin{align}
\label{nxfi::as:h}
g((\nabla_\xi\phi)Y,Z) = g(hY,Z)-g(hZ,Y).
\end{align}
For example manifold with closed fundamental form has tensor $h$ 
symmetric, more generally let $d\Phi = 2\alpha \wedge \Phi$, again on 
such manifold $h$ is symmetric, in particular $\nabla_\xi\phi =0$. For 
the reason of the above identity we introduce augmented tensor $\bar 
h$, by
\begin{align}
& \bar h = h-\tfrac{1}{2}\nabla_\xi\phi.
\end{align}
Even though $h$ is not symmetric in general, with augmented version 
symmetry is restored $g(\bar h Y,Z) = g(\bar h Z,Y)$.  Correspondingly 
let 
\begin{align}
\bar u(X,Y) = d\eta(\phi Y, X)+g(\bar h X,Y),
\end{align}
on $\eta$-normal manifold with $\xi$ autoparallel $\bar u$ is globally 
symmetric, $\bar u (X,Y)=\bar u(Y,X)$. Accordingly (\ref{covd}), can be 
written as
\begin{align}
& g((\nabla_X\phi)Y,Z)= \tfrac{3}{2}d\Phi(X,\phi Y, \phi Z)-
   \tfrac{3}{2}d\Phi(X,Y,Z) +  \bar u(X,Y)\eta(Z) - \\ 
& \qquad \bar u(X,Z)\eta(Y) - \tfrac{1}{2}g((\nabla_\xi\phi)X,Y)\eta(Z) +
   \tfrac{1}{2}g((\nabla_\xi\phi)X,Z)\eta(Y). \nonumber 
\end{align}
\begin{proposition}
\label{prop:acm:par}
Assume almost contact metric manifold $\mathcal M$ is a CR-manifold, 
that $h\phi +\phi h=0$, equivalently in geometric terms Reeb field is 
autoparallel with resp. to Levi-Civita connection. There exists linear 
connection $\tilde\nabla$ on $\mathcal M$, that almost contact metric 
structure is parallel with respect to $\tilde \nabla$, 
\begin{align}
& \tilde \nabla \phi = 0, \quad \tilde\nabla  \xi = 0, \quad 
\tilde\nabla \eta=0,\quad \tilde\nabla g = 0.
\end{align} 
\end{proposition}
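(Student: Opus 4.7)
The plan is to construct $\tilde\nabla$ as a correction of the Levi-Civita connection, writing $\tilde\nabla_X Y := \nabla_X Y + S(X,Y)$, and to translate the four parallelism requirements into algebraic conditions on the $(1,2)$-tensor $S$. A short computation (using $\eta=g(\cdot,\xi)$, which makes $\tilde\nabla\eta=0$ a consequence of $\tilde\nabla g=0$ and $\tilde\nabla\xi=0$) reduces the four requirements to the three
$$S(X,\xi) = -\nabla_X\xi, \qquad g(S(X,Y),Z) + g(Y,S(X,Z)) = 0, \qquad \phi S(X,Y) - S(X,\phi Y) = (\nabla_X\phi)Y.$$

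I would then propose the explicit formula, with $\bar Y := Y-\eta(Y)\xi$,
$$S(X,Y) \;:=\; g(\nabla_X\xi,Y)\xi - \eta(Y)\nabla_X\xi - \tfrac{1}{2}\phi(\nabla_X\phi)\bar Y,$$
so that $S = S_0 + S_1$ splits into a ``vertical'' piece $S_0$ and a ``horizontal'' piece $S_1$ with $S_1(X,\xi)=0$ and $\eta\circ S_1=0$. The piece $S_0$ is manifestly $g$-skew in its $(Y,\cdot)$ slots (using $g(\nabla_X\xi,\xi)=0$) and realizes $S_0(X,\xi)=-\nabla_X\xi$, so the first condition is disposed of; what remains is $g$-skewness of $S_1$ on its last two slots and the $\phi$-compatibility for the full $S$.

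The key tool is the universal identity
$$(\nabla_X\phi)\phi Y + \phi(\nabla_X\phi)Y \;=\; (\nabla_X\eta)(Y)\xi + \eta(Y)\nabla_X\xi,$$
obtained from $\phi^2 = -\mathrm{Id}+\eta\otimes\xi$. On horizontal $Y,Z$ this reduces to $g((\nabla_X\phi)Y,\phi Z)=g((\nabla_X\phi)\phi Y, Z)$, which gives $g$-skewness of $S_1$ at once. For the $\phi$-condition one expands $\phi S_1(X,Y)-S_1(X,\phi Y)$ using the same identity to obtain $(\nabla_X\phi)\bar Y + g(\nabla_X\xi,\phi Y)\xi$, whose $\xi$-term cancels exactly against the $\xi$-term in $\phi S_0(X,Y)-S_0(X,\phi Y)=-\eta(Y)\phi\nabla_X\xi - g(\nabla_X\xi,\phi Y)\xi$, while the surviving $\eta(Y)\phi\nabla_X\xi$ combines with $(\nabla_X\phi)\bar Y-(\nabla_X\phi)Y = -\eta(Y)(\nabla_X\phi)\xi = \eta(Y)\phi\nabla_X\xi$ to leave exactly $(\nabla_X\phi)Y$.

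The autoparallel hypothesis $\nabla_\xi\xi=0$ (equivalent, as the text notes, to $h\phi+\phi h=0$) is what allows $S_0(\xi,\cdot)$ to vanish, simplifying the $X=\xi$ case of the $\phi$-condition to its horizontal version; the CR-hypothesis underlies the covariant derivative formula \eqref{covd} of Proposition \ref{covd::enor}, which motivates the above form of $S_1$ and guarantees its expression purely in terms of $d\Phi$ and $\bar h$. The main obstacle is the sign- and $\xi$-component bookkeeping in the verification of the third condition, where the cancellation of three different $\xi$-valued contributions (from $S_0(X,\phi Y)$, from $\phi\circ S_1(X,Y)$ via $\phi^2=-\mathrm{Id}+\eta\otimes\xi$, and from $S_1(X,\phi Y)$ via the universal identity) must be traced carefully; once this is done, $\tilde\nabla\phi=\tilde\nabla\xi=\tilde\nabla\eta=\tilde\nabla g=0$ follow by construction.
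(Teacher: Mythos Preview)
Your argument is correct, and in fact cleaner than you give yourself credit for: the three algebraic conditions on $S$ are verified using only the universal identity coming from $\phi^2=-\mathrm{Id}+\eta\otimes\xi$ and the $g$-skewness of $\nabla\phi$, so neither the CR-hypothesis nor $\nabla_\xi\xi=0$ is actually invoked at any point of the verification. (Your remark that the autoparallel hypothesis makes $S_0(\xi,\cdot)$ vanish is true but unnecessary; the $\phi$-condition holds for $X=\xi$ without it.) The formula $S(X,Y)=g(\nabla_X\xi,Y)\xi-\eta(Y)\nabla_X\xi-\tfrac{1}{2}\phi(\nabla_X\phi)\bar Y$ therefore parallelizes the structure on \emph{any} almost contact metric manifold, which is consonant with the general fact that a $U(n)\times 1$-structure always admits an adapted connection.

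The paper's route is genuinely different. It builds the deformation in two stages: a first tensor $T^{(1)}$, written explicitly in terms of the data $F$, $B$, $\bar h$, $\nabla_\xi\phi$ appearing in the CR covariant-derivative formula \eqref{covd}, is shown via that formula to satisfy $(T^{(1)}_X\phi)Y=(\nabla_X\phi)Y$; the resulting connection $\nabla^{(1)}=\nabla-T^{(1)}$ parallelizes $\phi,\xi,\eta$ but not $g$, and a second symmetric correction $T^{(2)}$ built from $\nabla^{(1)}g$ restores $\tilde\nabla g=0$ while preserving the rest. The payoff of this more laborious approach is that the hypotheses are used substantively: the resulting $\tilde\nabla$ is a specific, canonically determined connection which specializes to the Tanaka connection in the contact metric case and to the natural connections in the almost cosymplectic and almost Kenmotsu cases (Examples~3.4--3.6), and which can then be singled out by torsion conditions when $d\Phi=0$. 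Your one-step construction gives existence more cheaply, but produces a different connection (for instance, in the contact metric case your $S_1$ vanishes and you miss the characteristic $-\eta(X)\phi Y$ term of the Tanaka connection), so it does not feed into the subsequent uniqueness theorem or the list of examples.
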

\begin{proof}
Let define tensor fields $(1,2)$-tensor fields $F_XY$,  by 
$g(F_XY,Z)=\frac{3}{2}d\Phi(X,Y,Z)$, and $(1,1)$-tensor field $B$,  
$d\eta(X,Y)=g(X,BY)$, observe  $d\eta(\phi X, Y)=d\eta(\phi Y,X)$, 
implies $\phi B= B\phi$.  We set $ \nabla^{(1)}_XY = \nabla^{(0)}_XY - 
T^{(1)}_XY$, where $\nabla^{(0)}$ is Levi-Civita connection, and 
$T^{(1)}_XY$ is deformation tensor  
\begin{align}
& T^{(1)}_XY = -\phi F_XY -\eta(X)B Y - \eta(Y)( \phi \bar hX+B X)+  \\ 
 & \qquad \tfrac{1}{2}\eta(Y)\phi(\nabla_\xi\phi)X+ (d\eta(Y,X)-{}
   \tfrac{1}{2}(\mathcal L_\xi g)(Y,X))\xi \nonumber 
\end{align}
By (\ref{lxiFi}) 
\begin{align}
& d\eta(\phi Y,X)-\tfrac{1}{2}(\mathcal L_\xi g)(\phi Y, X)  = 
   d\eta(\phi Y,X)+g(\bar hY,X) - \\
& \qquad g(\bar hY,X)-\tfrac{1}{2}(\mathcal L_\xi g)(\phi Y,X) = 
    \bar u(Y,X) - 
   \tfrac{1}{2}(\mathcal L_\xi \Phi)(X,Y) + \nonumber \\ 
& \qquad \tfrac{1}{2}g((\nabla_\xi\phi)Y,X),\nonumber  
\end{align}
hence
\begin{align}
\label{tphi}
& T^{(1)}_X\phi Y = -\phi F_X\phi Y -\eta(X)B\phi Y +\bar u(Y,X)\xi - 
   \tfrac{1}{2}(\mathcal L_\xi\Phi)(X,Y)\xi -  \\
& \qquad \tfrac{1}{2}g((\nabla_\xi\phi)X,Y)\xi, \nonumber
\end{align}
as $\eta(F_XY)= g(\xi, F_XY) = \tfrac{3}{2}d\Phi(\xi, X,Y)= 
\tfrac{1}{2}(\mathcal L_\xi\Phi)(X,Y)$
\begin{align}
\label{phit}
& \phi T_XY = -\phi^2 F_XY -\eta(X)\phi BY-\eta(Y)(\phi^2 \bar hX + 
  \phi B  X)+ \\ 
& \qquad \tfrac{1}{2}\eta(Y)\phi^2(\nabla_\xi\phi)X =  F_XY - 
   \tfrac{1}{2}(\mathcal L_\xi\Phi)(X,Y)\xi - \eta(X)\phi B Y+ 
     \nonumber \\ 
& \qquad  \eta(Y)(\bar hX-\phi BX)-\tfrac{1}{2}\eta(Y)(\nabla_\xi\phi)X. 
 \nonumber  
\end{align}.
By (\ref{tphi}), (\ref{phit}), (\ref{2etah::etanfi}), 
$(T^{(1)}_X\phi)Y = T^{(1)}_X\phi Y - \phi T^{(1)}_XY$, 
\begin{align}
& (T^{(1)}_X\phi)Y = -\phi F_X\phi Y - F_XY + \bar u(X,Y)\xi -\\
& \qquad  \eta(Y)(\bar hX -\phi B X)- 
  \tfrac{1}{2}g((\nabla_\xi\phi)X,Y)\xi + 
  \tfrac{1}{2}\eta(Y)(\nabla_\xi\phi)X,  
\nonumber
\end{align} 
and from (\ref{covd}), (\ref{nxfi::as:h})
\begin{align}
&  (\nabla_X\phi)Y = -\phi F_X\phi Y - F_XY + \bar u(X,Y)\xi - \\ 
& \qquad  \eta(Y)( \bar h X -\phi B X ) -
      \tfrac{1}{2}g((\nabla_\xi\phi)X,Y)\xi + 
      \tfrac{1}{2}\eta(Y)(\nabla_\xi\phi)X. \nonumber
\end{align}
Similarly we verify $\nabla_X\xi  = T_X^{(1)}\xi$. Therefore
\begin{align}
\nabla^{(1)}\phi =0, \quad \nabla^{(1)}\xi= 0, \quad 
\nabla^{(1)}\eta = 0. 
\end{align}
 In general connection $\nabla^{(1)}$  
does not fulfill $\nabla^{(1)} g=0$. However 
\begin{align}
& (\nabla^{(1)}_X g)(\phi Y, \phi Z)= (\nabla^{(1)}_X g)(Y,Z), \\
& (\nabla^{(1)}_X g)(\phi Y, Z)+(\nabla^{(1)}_X g)(Y,\phi Z)=0.
\end{align}
Let define connection  
\begin{align}
\nabla^{(2)}_XY = \nabla^{(1)}_XY -T^{(2)}_XY,
\end{align}
with deformation tensor $T^{(2)}$  
\begin{align}
g(T^{(2)}_XY,Z)=-\tfrac{1}{2}(\nabla^{(1)}_Xg)(Y,Z).
\end{align}
Set $\tilde \nabla=\nabla^{(2)}$. Connection $\tilde \nabla $ has all 
required properties 
\begin{align*}
& \tilde \nabla \phi = 0, \quad \tilde\nabla  \xi = 0, \quad 
  \tilde\nabla \eta=0,\quad \tilde\nabla g = 0.
\end{align*}
\end{proof}

\begin{theorem}
Let $\mathcal M$ be $\eta$-normal manifold with closed fundamental 
form. There is unique linear connection on $\mathcal M$, that 
\begin{align}
  \tilde\nabla\phi = 0, \quad \tilde \nabla \xi =0, \quad 
  \tilde\nabla\eta = 0, \quad \tilde\nabla g = 0,
\end{align}
with torsion $S(X,Y)$, satisfying
\begin{align}
& S(\xi ,\phi Y) = -\phi S(\xi, Y), & \\
& S(X,Y) = 2d\eta(X,Y)\xi, \quad {\rm {on\; vector\; fields} }\quad 
  \eta(X)=\eta(Y)=0. &
\end{align}
\end{theorem}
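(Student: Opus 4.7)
The plan is to split the theorem into existence and uniqueness. For existence I would invoke Proposition~\ref{prop:acm:par}, since the closure hypothesis $d\Phi = 0$ is a specialization of its setting: closure forces $h$ symmetric, whence $\nabla_\xi \phi = 0$, $\bar h = h$, and also the three-form contribution $F_XY$ disappears from the deformation tensor. In this regime the first-stage connection $\nabla^{(1)}$ is already metric -- a direct check shows $\nabla^{(1)}_X g \equiv 0$ on all combinations of horizontal and Reeb arguments -- so the second deformation $T^{(2)}$ vanishes and $\tilde \nabla = \nabla^{(1)}$ is the candidate.

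For the torsion identities I would compute $S(X,Y) = -T^{(1)}_XY + T^{(1)}_YX$ directly. On horizontal $X,Y$ the deformation collapses to $T^{(1)}_XY = (d\eta(Y,X) - \tfrac{1}{2}(\mathcal L_\xi g)(Y,X))\xi$; antisymmetrizing, the symmetric Lie derivative $\mathcal L_\xi g$ cancels and only $2d\eta(X,Y)\xi$ survives, yielding the second required identity. For $X = \xi$ and horizontal $Y$, the formula $\nabla_Y \xi = -BY - \phi hY$ (valid here thanks to $\mathcal L_\xi \eta = 0$ together with identity (\ref{lxiFi})) gives $S(\xi,Y) = \nabla_Y\xi - T^{(1)}_\xi Y = -\phi hY$. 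The first torsion identity then follows from the algebraic manipulation $S(\xi,\phi Y) = -\phi h\phi Y = \phi^2 hY = -hY = -\phi S(\xi,Y)$, using $h\phi + \phi h = 0$ and $\eta\circ h = 0$.

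For uniqueness, let $\tilde\nabla_1, \tilde\nabla_2$ be two connections satisfying the conclusion and set $D = \tilde\nabla_1 - \tilde\nabla_2$, a $(1,2)$-tensor. Parallelism of $\xi, \eta, \phi, g$ for each $\tilde\nabla_i$ translates to $D_X\xi = 0$, $\eta\circ D_X = 0$, $D_X\phi = \phi D_X$, and the $g$-skew relation $g(D_XY,Z) + g(Y, D_XZ) = 0$. Matching of torsions gives the symmetry $D_XY = D_YX$. Putting $A(X,Y,Z) = g(D_XY,Z)$, on horizontal $X,Y,Z$ the tensor $A$ is symmetric in its first two slots and skew in its last two, and the standard cyclic permutation chain $A(X,Y,Z) = A(Y,X,Z) = -A(Y,Z,X) = -A(Z,Y,X) = A(Z,X,Y) = A(X,Z,Y) = -A(X,Y,Z)$ forces $A \equiv 0$. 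Combined with $D_X\xi = 0$ and $D_\xi Y = D_Y\xi = 0$ this gives $D \equiv 0$. I expect the subtlest point to be bookkeeping the identity $\nabla^{(1)} g \equiv 0$ under $d\Phi = 0$, which is what lets me identify $\tilde\nabla$ with $\nabla^{(1)}$ and make the torsion computation at $X = \xi$ tractable; once that is in hand, both halves of the theorem are essentially formal.
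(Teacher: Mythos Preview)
The paper states this theorem without proof, passing directly to the examples that follow, so there is no argument in the text against which to compare yours. Your approach is sound and fills the gap the paper leaves.

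A few remarks on details. For existence, you correctly observe that under $d\Phi = 0$ the deformation tensor of Proposition~\ref{prop:acm:par} collapses ($F=0$, $\bar h = h$, $\nabla_\xi\phi = 0$), and a direct check shows $g(T^{(1)}_XY,Z) + g(T^{(1)}_XZ,Y) = 0$, so $\nabla^{(1)}$ is already metric and $T^{(2)} = 0$; the torsion computations are then routine and your derivation $S(\xi,Y) = -\phi hY$ is clean. One point you pass over: the theorem as stated does not explicitly assume $\nabla_\xi\xi = 0$, yet you (and implicitly Proposition~\ref{prop:acm:par}) use it throughout. The paper's surrounding discussion treats closed fundamental form as an instance of the autoparallel-Reeb setting, so this is presumably intended; still, a sentence flagging it as an implicit hypothesis, or justifying $\mathcal L_\xi\eta = 0$ from the stated assumptions, would tighten the argument.

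For uniqueness, your claim ``$D_\xi Y = D_Y\xi = 0$'' needs one more line. The equality $D_Y\xi = 0$ is immediate from $\tilde\nabla_i\xi = 0$, but $D_\xi Y = 0$ does \emph{not} follow from the symmetry $D_XY = D_YX$, which the second torsion hypothesis gives only on horizontal pairs. It comes instead from the first torsion hypothesis: since both connections satisfy $S_i(\xi,\phi Y) = -\phi S_i(\xi,Y)$, subtracting and using $D_{\phi Y}\xi = D_Y\xi = 0$ together with $D_\xi\phi Y = \phi D_\xi Y$ gives $\phi D_\xi Y = -\phi D_\xi Y$, hence $\phi D_\xi Y = 0$; combined with $\eta(D_\xi Y) = 0$ this forces $D_\xi Y = 0$. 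With that in hand your cyclic permutation argument on horizontal triples finishes the proof.
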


We shall provide now examples where we apply above result to obtain 
connections which make parallel corresponding structures. For the sake 
to make some comparisons easier notice identity
\begin{align}
& d\eta(X,Y) -\tfrac{1}{2}(\mathcal L_\xi g)(X,Y) = -g(\nabla_Y \xi,X)= 
  -(\nabla_Y\eta)(X),
\end{align}
here $\nabla$ denotes Levi-Civita covariant derivative.

\begin{example}[Tanaka connection] 
For contact metric manifold $F_XY=0$, $\nabla_\xi \phi = 0$, $\bar h = 
h$, $B=\phi$, tensor $T_X^{(1)}Y$ reads
\begin{align}
& T_X^{(1)}Y = -\eta(X)\phi Y - \eta(Y)(\phi X+\phi h X) - 
  g(\nabla_X\xi,Y)\xi,
\end{align}
moreover on contact metric manifold $h = -\phi - \phi h$, now 
$T_X^{(1)}Y$ takes more familiar form
\begin{align}
& T_X^{(1)}Y = -\eta(X)\phi Y +\eta(Y)\nabla_X\xi -g(\nabla_X\xi,Y)\xi,
\end{align}
conection $\nabla_X^{(1)} = \nabla_X - T_X^{(1)}$ is Tanaka connection 
on contact metric CR-manifold, \cite{Tanaka}, {\rm (\cite{Blair}, 
~10.4, p.~170)}.
\end{example}

\begin{example}[Parallelization on  $\eta$-normal almost cosymplectic 
manifold] 
For $\eta$-normal almost cosymplectic manifold $F_XY=0$, 
$\nabla_\xi\phi=0$, $\bar h = h$, $B=0$, deformation tensor takes form
\begin{align}
& T_X^{(1)}Y = -\eta(Y)\phi h X-g(\nabla_X\xi,Y)\xi,
\end{align}
on almost cosymplectic manifold $\nabla\xi = -\phi h$, more symmetric 
form is
\begin{align}
& T_X^{(1)}Y = \eta(Y)\nabla_X\xi-g(\nabla_X\xi,Y)\xi.
\end{align}
We find $T^{(2)}=0$, thus connection $\tilde\nabla -T^{(1)}$, 
parallelize all structure $\tilde \nabla \phi = 0$, $\tilde \nabla \xi 
= 0$, $\tilde \nabla\eta = 0$, and $\tilde \nabla g =0$.
\end{example}

\begin{example}[Parallelization on $\eta$-normal almost Kenmotsu 
manifold] 
On almost Kenmotsu manifold 
\begin{align}
& F_XY = -\eta(X)\phi Y +\eta(Y)\phi X+ \Phi(X,Y)\xi,
\end{align}
$B=0$, $\nabla_\xi\phi  =0$, $\bar h = h$, for deformation tensor
\begin{align}
& T_X^{(1)}Y = -\eta(X)Y+\eta(Y)X -\eta(Y)\phi hX -g(\nabla_X\xi,Y)\xi,
\end{align}
observe $\nabla_X\xi = X - \phi h X - \eta(X)\xi$, in more symmetric 
form
\begin{align}
& T_X^{(1)}Y = -\eta(X)Y + \eta(Y)\nabla_X\xi - g(\nabla_X\xi,Y)\xi + 
   \eta(Y)\eta(X)\xi,
\end{align}
for tensor $T^{(2)}$ 
\begin{align}
  T_X^{(2)}Y = \eta(X)Y -\eta(X)\eta(Y)\xi,
\end{align}
by our construction connection
\begin{align}
   \tilde\nabla_XY = \nabla_XY-\eta(Y)\nabla_X\xi +g(\nabla_X\xi, Y)\xi,
\end{align}
is required connection.
\end{example}

\begin{remark}
$\eta$-Normal almost cosymplectic or almost Kennmotsu manifolds admits 
natural foliation  by K\"ahler hypersurfaces. Fixing K\"ahler leaf 
$\mathcal N_a$, the connections in above examples are extensions of 
usual Levi-Civita connection  on $\mathcal N_a$. This rises question of 
uniqueness of such extension. Another problem is following: if 
$\eta$-normal almost cosymplectic manifold or almost Kenmotsu is 
locally homogenous, in the sense of Ambrose-Singer theorem is it true 
that every K\"ahler leaf must be locally homogenous as well?  
\end{remark}

\begin{theorem} 
Let $\mathcal M$ be $\eta$-normal almost contact metric manifold with 
autoparallel Reeb field. Set of parallelizations on $\mathcal M$, ie. 
connections which make  underlying almost contact metric structure 
parallel, has natural structure of affine bundle over $\mathcal M$ 
modeled on vector bundle of $(1,2)$-tensors $T_XY$ with properties 
\begin{align}
\label{t:fi:xi:g}
T_X\phi Y = \phi T_XY, \quad T_X\xi =0, \quad g(T_XY,Z)+g(Y,T_XZ)=0.
\end{align}
\end{theorem}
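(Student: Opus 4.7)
The plan is to verify that the difference of any two parallelizations is a $(1,2)$-tensor satisfying \eqref{t:fi:xi:g}, and conversely that perturbing a parallelization by such a tensor yields another parallelization. Proposition~\ref{prop:acm:par} supplies at least one parallelization under the stated hypotheses, so the set under consideration is non-empty and the affine structure is non-trivial.

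Let $\tilde\nabla^1$ and $\tilde\nabla^2$ be two parallelizations and set $T_XY=\tilde\nabla^1_XY-\tilde\nabla^2_XY$, which is tensorial in both arguments by the usual affine-space structure on connections. Subtracting the two parallelism identities on each of $\phi$, $\xi$ and $g$ gives
\begin{equation*}
0=(\tilde\nabla^1_X\phi)Y-(\tilde\nabla^2_X\phi)Y=T_X\phi Y-\phi T_XY,
\end{equation*}
\begin{equation*}
0=\tilde\nabla^1_X\xi-\tilde\nabla^2_X\xi=T_X\xi,
\end{equation*}
\begin{equation*}
0=(\tilde\nabla^1_Xg)(Y,Z)-(\tilde\nabla^2_Xg)(Y,Z)=-g(T_XY,Z)-g(Y,T_XZ),
\end{equation*}
which are exactly the three conditions \eqref{t:fi:xi:g}. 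The fourth structural relation $\tilde\nabla\eta=0$ imposes no further constraint on $T$: since $\eta(Y)=g(Y,\xi)$, the vanishing of $\tilde\nabla g$ together with $\tilde\nabla\xi=0$ already forces $\tilde\nabla\eta=0$, so this compatibility is absorbed automatically.

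Conversely, given any parallelization $\tilde\nabla$ and any $(1,2)$-tensor $T$ satisfying \eqref{t:fi:xi:g}, I set $\hat\nabla_XY=\tilde\nabla_XY-T_XY$; reading the three displays above in reverse shows $\hat\nabla\phi=0$, $\hat\nabla\xi=0$, $\hat\nabla g=0$, and then $\hat\nabla\eta=0$ by the same argument. Combining both directions, the set of parallelizations is a torsor over the space of sections of the vector subbundle of $T^*\mathcal M\otimes\operatorname{End}(T\mathcal M)$ cut out pointwise by \eqref{t:fi:xi:g}, which is the asserted affine-bundle structure. No real obstacle arises: the proof amounts to bookkeeping the difference of two connections, and the only mildly subtle point worth flagging in the write-up is precisely the observation that $\eta$-compatibility need not be listed in \eqref{t:fi:xi:g} because it is a consequence of the metric and $\xi$ conditions.
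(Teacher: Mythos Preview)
Your proof is correct and follows the same approach as the paper: invoke Proposition~\ref{prop:acm:par} for non-emptiness and examine the difference tensor of two parallelizations. In fact your argument is more complete than the paper's, which merely asserts that the difference satisfies \eqref{t:fi:xi:g} without writing out the verification or the converse direction.
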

\begin{proof}
First of all by {\bf Proposition \ref{prop:acm:par}}, we know set of 
parallelizations is non-empty. Let $\nabla^{(1)}$, $\nabla^{(2)}$ be 
such parallelizations, the difference $T_XY = \nabla_X^{(2)}Y - \nabla 
_X^{(1)}$, is tensor field which satisfies conditions of the theorem. 
\end{proof}

Even though just stated result is rather evident, it has been marked as 
theorem due to important consequences. It is possible now to classify 
all parallelizations on all $\eta$-normal almost contact metric 
manifolds with autoparallel Reeb field in way similar as Gray-Hervella 
or Chinea-Gonzalez classifications. Vector space of tensors 
(\ref{t:fi:xi:g}) is in natural way $\mathcal G$-module of 
corresponding Lie group $\mathcal G$, decomposition into irreducible 
$\mathcal G$-modules gives us required classification.  Below we 
provide short list of examples of $\eta$-normal almost contact metric 
manifolds, which should gives reader picture how large is class of 
$\eta$-normal almost contact metric manifolds.

\begin{example}[Products]
Product $\widetilde{  \mathcal{M} }= \mathcal M\times \mathcal H$, of 
$\eta$-normal manifold $\mathcal M$ and Hermitian manifold $\mathcal H$ 
is again $\eta$-normal manifold. 
\end{example}

\begin{example}[Hypersurfaces] 
Real hypersurface $\widetilde{\mathcal M} \subset \mathcal H$, in 
Hermitian manifold, is $\eta$-normal manifold with induced almost 
contact metric structure.
\end{example}

\subsection{Quasi-Sasakian manifolds}
Let recall definition of quasi-Sasakian manifold: it is normal  almost contact
manifold with closed fundamental form. Rank of quasi-Sasakian manifold is defined 
as $r=2p+1$, $\eta\wedge (d\eta)^p\neq 0$, $(d\eta)^{p+1}=0$, quasi-Sasakian 
manifold has maximal rank if $\eta$ is contact form, in other words rank of manifold is 
equal to its dimension, \cite{Blair67}.

\begin{theorem}
For quasi-Sasakian manifold    
\begin{align}
  & g((\nabla_X\phi)Y,Z) = d\eta(\phi Y,X)\eta(Z) - 
      d\eta(\phi Z,X)\eta(Y), \\
  & g(\nabla_X\xi,Y) = d\eta(X,Y),
\end{align}
\end{theorem}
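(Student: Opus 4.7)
The plan is to leverage the machinery already built up in the paper, particularly formula (\ref{covd}) and the corollary identifying normal manifolds as $\eta$-normal manifolds with $h=0$. Since a quasi-Sasakian manifold is by definition normal with $d\Phi=0$, both $h=0$ and $d\Phi=0$ are available for free. This should collapse the general covariant-derivative formula almost immediately.

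First I would handle the formula for $(\nabla_X\phi)Y$. Starting from (\ref{covd}), the terms $\tfrac{3}{2}d\Phi(X,\phi Y,\phi Z)$ and $\tfrac{3}{2}d\Phi(X,Y,Z)$ both vanish because $d\Phi=0$. Since $h=0$ we have $u(Y,X)=d\eta(\phi Y,X)$, and the stated first identity falls out.

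The second identity reduces, via the preliminary relation
\begin{equation*}
2g(\nabla_X\xi,Y)=2d\eta(X,Y)+(\mathcal L_\xi g)(X,Y),
\end{equation*}
to showing that $\xi$ is Killing, i.e.\ $\mathcal L_\xi g=0$. Here I would argue as follows. Because $h=\tfrac12\mathcal L_\xi\phi=0$, identity (\ref{lxiFi}) gives $(\mathcal L_\xi\Phi)(X,Y)+(\mathcal L_\xi g)(\phi X,Y)=0$. On the other hand $\mathcal L_\xi\Phi=d(i_\xi\Phi)+i_\xi d\Phi$; the second term vanishes since $d\Phi=0$, and $i_\xi\Phi=0$ because $\phi\xi=0$ together with the skew-symmetry $g(X,\phi Y)=-g(\phi X,Y)$. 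Hence $\mathcal L_\xi\Phi=0$, and therefore $(\mathcal L_\xi g)(\phi X,Y)=0$ for all $X,Y$.

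Finally, I would promote this to $\mathcal L_\xi g=0$ on all of $T\mathcal M$. Replacing $X$ by $\phi X$ and using $\phi^2=-Id+\eta\otimes\xi$ yields $(\mathcal L_\xi g)(X,Y)=\eta(X)(\mathcal L_\xi g)(\xi,Y)$, so it suffices to show $(\mathcal L_\xi g)(\xi,\cdot)=0$. Since the corollary characterising normality shows in particular that $\nabla_\xi\xi=0$ (the $h=0$ condition is equivalent to $\nabla_\xi\xi=0$ via the chain of equivalences noted in the paper), we get $(\mathcal L_\xi g)(\xi,Y)=2g(\nabla_\xi\xi,Y)-2d\eta(\xi,Y)=0$. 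Thus $\xi$ is Killing, $(\mathcal L_\xi g)(X,Y)\equiv 0$, and the second formula drops out of the preliminary identity. The only subtlety to watch is this propagation from $(\mathcal L_\xi g)(\phi\,\cdot,\cdot)=0$ to full Killing-ness, which is why I isolate the $\xi$-direction separately.
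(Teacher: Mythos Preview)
Your argument is correct and is precisely the derivation the paper has in mind: the theorem is stated there without proof, as an immediate specialization of (\ref{covd}) once one knows that a quasi-Sasakian manifold is normal (hence $\eta$-normal with $h=0$) and has $d\Phi=0$. Your treatment of the second identity, reducing it to $\mathcal L_\xi g=0$ via the Cartan formula for $\mathcal L_\xi\Phi$ and the chain of equivalences for $\nabla_\xi\xi=0$, is the natural way to fill in what the paper leaves implicit.

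One small wording issue: you write that ``the $h=0$ condition is equivalent to $\nabla_\xi\xi=0$'', but the paper's chain of equivalences is $h\phi+\phi h=0 \Leftrightarrow \nabla_\xi\xi=0$, which is strictly weaker than $h=0$. What you actually need (and use) is only the implication $h=0 \Rightarrow h\phi+\phi h=0 \Rightarrow \nabla_\xi\xi=0$, so the logic is fine; just replace ``equivalent'' by ``implies''. You might also make explicit that $d\eta(\xi,Y)=0$ comes from $\mathcal L_\xi\eta=0$ (another member of that same equivalence chain), since you use it silently in the final step.
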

therefore, cf. (\cite{Tanno1971}, (2.12))
\begin{align}
  & g((\nabla_X\phi)Y,Z) = -\eta(Z)(\nabla_X\eta)(\phi Y) + 
       \eta(Y)(\nabla_X\eta)(\phi Z),
\end{align}
\begin{theorem}
Let $\mathcal M$ be quasi-Sasakian manifold. Let $\mathcal C$ be 
distribution of vector fields on $\mathcal M$, that $\nabla_X\phi =0$, 
$X$ denotes section of $\mathcal C$. Then  $\mathcal C$ is invariant, 
$\phi\, \mathcal C \subset \mathcal C$, integrable $[\mathcal C, 
\mathcal C] \subset \mathcal C$, if ${dim}\, \mathcal C > 1$, every 
leaf is cosymplectic submanifold. In other words if ${dim}\, \mathcal C 
> 1$, quasi-Sasakian manifold carries natural foliation by cosymplectic 
manifolds, dimension ${ dim}\,\mathcal M - r+1$.  
\end{theorem}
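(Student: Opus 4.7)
The plan is to identify the distribution $\mathcal C$ explicitly and then deduce each claim from elementary properties of $d\eta$.

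\emph{Step 1: Characterization and dimension.} From the formula for $(\nabla_X\phi)$ just displayed, the equation $\nabla_X\phi=0$ is equivalent to $d\eta(\phi Y,X)\eta(Z)-d\eta(\phi Z,X)\eta(Y)=0$ for all $Y,Z$. Specializing $Z=\xi$ yields $d\eta(\phi Y,X)=0$ for every $Y$, and since $\phi$ maps $TM$ onto $\ker\eta$ this says $d\eta(W,X)=0$ for every $W\in\ker\eta$. On a normal almost contact metric manifold $h=\tfrac12\mathcal L_\xi\phi=0$, hence by the identity $h\phi+\phi h=\tfrac12(\mathcal L_\xi\eta)\otimes\xi$ also $\mathcal L_\xi\eta=0$ and so $\iota_\xi d\eta=0$. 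Together, these give $\mathcal C=\ker d\eta$. The rank hypothesis $\eta\wedge(d\eta)^p\neq 0$, $(d\eta)^{p+1}=0$ forces $\mathrm{rank}\,d\eta=2p=r-1$, whence $\dim\mathcal C=\dim\mathcal M-r+1$.

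\emph{Step 2: The invariance $\phi\,\mathcal C\subset\mathcal C$.} I would first establish the auxiliary identity $d\eta(\phi X,\phi Y)=d\eta(X,Y)$ on any normal almost contact metric manifold: applying $\eta$ to $[\phi,\phi](X,Y)=-2d\eta(X,Y)\xi$ and using $\eta\circ\phi=0$ yields $\eta([\phi X,\phi Y])=-2d\eta(X,Y)$, and combined with the Cartan formula for $d\eta(\phi X,\phi Y)$ this gives the claim. Then for $X\in\mathcal C=\ker d\eta$ and any $Y$, $d\eta(\phi X,\phi Y)=d\eta(X,Y)=0$; since $\phi Y$ sweeps out $\ker\eta$ and $\iota_\xi d\eta=0$, we obtain $\phi X\in\ker d\eta=\mathcal C$.

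\emph{Step 3: Involutivity.} This is immediate from $d(d\eta)=0$: for $X,Y\in\ker d\eta$ and arbitrary $Z$, the standard formula for the exterior derivative of a $2$-form collapses to $d\eta([X,Y],Z)=0$, whence $[X,Y]\in\mathcal C$.

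\emph{Step 4: Cosymplectic leaves.} Let $\mathcal N$ be a leaf of the foliation produced by Steps~2 and~3. Because $\xi\in\mathcal C$ and $\phi\mathcal C\subset\mathcal C$, the restrictions $(\phi|_{\mathcal N},\xi,\eta|_{\mathcal N},g|_{\mathcal N})$ form an almost contact metric structure on $\mathcal N$ of odd dimension $\dim\mathcal M-r+1\ge 3$. The pullback of $d\eta$ to $\mathcal N$ vanishes by the definition of $\mathcal C$; the pullback of $d\Phi$ vanishes because $d\Phi=0$ on $\mathcal M$; and the Nijenhuis-type tensor $N^{(1)}$ of the induced structure coincides with the restriction of the ambient $N^{(1)}$ (since $\phi$-invariance and involutivity keep all Lie brackets and $\phi$-images inside $\mathcal C$), so it vanishes by normality. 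This is the definition of a cosymplectic structure, completing the proof. The main obstacle is the auxiliary identity $d\eta(\phi\cdot,\phi\cdot)=d\eta(\cdot,\cdot)$ used in Step~2, which is not recorded earlier in the excerpt but is the standard bridge between the tensorial normality condition $N^{(1)}=0$ and the $\phi$-compatibility of $d\eta$; once this is in hand, the rest reduces to bookkeeping with $\ker d\eta$.
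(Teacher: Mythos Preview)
Your argument is correct and follows the same underlying line as the paper, with one streamlining worth noting: you identify $\mathcal C=\ker d\eta$ explicitly at the outset and then work with this characterization throughout, whereas the paper retains the defining condition $\nabla_X\phi=0$ and pushes the formula $g((\nabla_X\phi)Y,Z)=d\eta(\phi Y,X)\eta(Z)-d\eta(\phi Z,X)\eta(Y)$ through each step. For $\phi$-invariance the paper computes $g((\nabla_{\phi X}\phi)Y,Z)$ directly via the symmetry $d\eta(\phi A,B)=d\eta(\phi B,A)$ recorded earlier for $\eta$-normal manifolds (equivalent, under $\iota_\xi d\eta=0$, to your auxiliary identity $d\eta(\phi\cdot,\phi\cdot)=d\eta(\cdot,\cdot)$); for involutivity it expands $3(d^2\eta)(\phi Y,X,W)=0$ term by term to extract $d\eta([X,W],\phi Y)=0$, which is exactly your Step~3 computation spelled out; for the leaves it appeals to Blair's characterization, which your Step~4 verifies directly. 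The payoff of your kernel description is that involutivity and the dimension formula $\dim\mathcal C=\dim\mathcal M-r+1$ become one-line consequences of standard facts about $\ker d\eta$, while the paper only remarks on the dimension at the very end of its proof.
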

\begin{proof}
Case of $\rm {dim} \,\mathcal C =1$, is trivial, $\mathcal C$ is just 
$\mathcal C = \mathbb R\xi$. Let $Y$, $Z$ be arbitrary vector fields,
 consider $\rm {dim} \,\mathcal C > 1$.
If $\nabla_X\phi=0$, then
\begin{align}
   & g((\nabla_{\phi X}\phi)Y,Z) = d\eta(\phi Y, \phi X)\eta(Z) -  
      d\eta(\phi Z,\phi X)\eta(Y) = \\
   & \qquad d\eta(\phi^2 X,Y)\eta(Z) -  d\eta(\phi^2 X, Z)\eta(Y) = 0,
     \nonumber 
\end{align}
therefore $\phi \,\mathcal C \subset \mathcal C$. Let $\nabla_W\phi=0$, 
by 
\begin{align}
  & 0 = 3(d^2\eta)(\phi Y, X,W) = \phi Yd\eta(X,W) + 
        Xd\eta(W,\phi Y) + \\
  & \qquad Wd\eta(\phi Y,X) - d\eta([\phi Y,X],W) - 
        d\eta([X,W],\phi Y) -
    \nonumber \\
  & \qquad d\eta([W,\phi Y],X]= - d\eta([X,W],\phi Y),
    \nonumber
\end{align}
we have
\begin{align}
   g((\nabla_{[X,W]}\phi)Y,Z) = d\eta(\phi Y, [X,W])\eta(Z) - 
       d\eta(\phi Z, [X,W])\eta(Y) = 0,
\end{align}
we see $\mathcal C$ is involutive $[\mathcal C,\mathcal C] \subset 
\mathcal C$, it determines foliation of $\mathcal M$, as $\xi$ is 
tangent to $\mathcal C$, leaves are invariant submanifolds. Let 
$(\tilde\phi, \tilde\xi,\tilde\eta, \tilde g)$ be an almost contact 
metric structure on a leaf $\mathcal L$, by normality of $\mathcal M$, 
Nijenhuis torsion of $\tilde\phi$ vanishes thus by Blair's theorem 
$\mathcal L$ is cosymplectic manifold. Observe that rank of 
quasi-Sasakian manifold is equal to $\rm{dim}\,C^\perp +1$.
\end{proof}

Universal cover of complete cosymplectic manifold is Riemann product of 
real line and corresponding K\"ahler manifold. Particular cases are 
tori with flat cosymplectic structure. Problem arises to study 
structure of quasi-Saskian manifold on toric bundles over K\"ahler 
manifold in the soul of Boothby-Wang theorem or Blair theorem 
concerning principal circle bundles over K\"ahler manifold in general 
case of quasi-Sasakian  manifold.

Basic invariants of quasi-Sasakian manifold is its rank and signature 
of Levi form, if signature is $(2k, 2l)$ rank of manifold is $r = 2(k + 
l) + 1$. Below we provide family of examples for every possible 
combinations of rank and signature.

\begin{example}
Let $\mathcal M = \mathbb R^{2n+1}$, $(x^i, y^i, z)$, $i=1,\ldots n$ be 
global coordinates of point, $q(v)= a_{ij}v^iv^j$, $i,j=1,\ldots n$ be 
real quadratic form on $n$-dimensional real vector space. Set 
$\eta = dz -a_{ji}y^idx^j$, $\xi =\partial/\partial_z$,
\begin{align}
  & V_k = \sqrt{2}\partial_{y^k}, \quad
    V_{n+l} = \sqrt{2}(a_{li}y^i \partial_z + \partial_{x^l}),\quad
    k,l,i=1,\ldots n,
\end{align}
define almost contact metric structure $(\phi, \xi,\eta,g)$
\begin{align}
  & \phi \xi =0,\quad \phi V_k = V_{n+k}, \quad \phi V_{n+k} = -V_k, 
    \quad k =1,\ldots n,
\end{align}
metric $g$ is defined by requirement $(\xi, V_k, V_{n+k})$ form 
orthonormal frame. Metric tensor and fundamental form in coordinates 
have simple forms 
\begin{align}
  & g = \eta\otimes\eta + 
        \frac{1}{2}\sum\limits_{i=1}^n ( dx^i\otimes dx^i + 
        dy^i\otimes dy^i), \\
  & \Phi = 2\sum\limits_{i=1}^2 dx^i\wedge dy^i,
\end{align}
for Levi form $ L = d\eta(\phi Y,X)$
\begin{align}
  & L = \frac{1}{2}\sum\limits_{i,j=1}^n a_{ij}dx^i\otimes dx^j + 
      \frac{1}{2}\sum\limits_{i,j=1}^{n}a_{ij}dy^i\otimes dy^j,
\end{align}
the only non-zero Jacobi commutators of vector fields $\xi$, $V_i$, 
$V_{n+j}$, $i,j=1,\ldots n$ are
\begin{align}
  & [V_i, V_{n+j}] =2a_{ji}\partial_z, 
\end{align}
we apply this to find $N^{(1)}=0$, structure is normal,
from coboundary formula $2d\eta(X,Y) = X\eta(Y)-Y\eta(X) - \eta([X,Y])$ 
\begin{align*}
  & d\eta(\phi V_i, V_j) = d\eta(\phi V_j, V_i) =  a_{ij}, \\
  &  d\eta(\phi V_{n+i}, V_{n+j}) = d\eta(\phi V_{n+j}, V_{n+i}) = 
     a_{ij}, \\
  & d\eta(\phi V_i, V_{n+j}) = d\eta(\phi V_{n+j}, V_i) = 0, \quad 
    i,j = 1,\ldots n,
\end{align*} Levi form $-d\eta(X,\phi Y)$ has constant signature 
$(2k,2l)$, where $(k,l)$ is signature of quadratic form $q(v)$. Vector 
fields $\xi, V_i, V_{n+i}$, $i=1,\ldots n$ form Lie algebra of 
group of Heisenberg type $\mathbb H^{2n+1}$. Therefore there exists 
left-invariant almost contact metric structure on corresponding  Lie 
group. As $\mathbb H^{2n+1}$ is nilpotent, by Malcev theorem if 
quadratic form $q(v)$ is over $\mathbb Z$, group $\mathbb H^{2n+1}$ 
admits cocompact lattice $\Gamma$. If $\Gamma \backslash \mathbb 
H^{2n+1}$ is right compact quotient left-invariant structure projects 
onto structure on compact nilmanifold $\Gamma \backslash \mathbb 
H^{2n+1}$. 
\end{example}

\begin{example}[$\alpha$-Sasakian manifolds] 
Set in above example $a_{ij} = \alpha \delta_{ij}$, $i,j=1,\ldots n$, 
$\delta_{ij}$ - Kronecker's symbol. Obtained manifold is $\alpha$-Sasakian.
\end{example}

\section{Normal and $\tau$-normal almost paracontact  metric manifolds}
\label{sec:pcm}
For the sake of completness of exposition we shall provide here some 
known concepts and constructions on almost paracontact metric 
manifolds. Here we obtain necessary and sufficient conditions for 
vanishing Nijenhuis torsion of almost paracomplex structure $J$, 
(\ref{def:pJ}) \footnote{All these results are well-known in the 
literature.}. It  suffices to compute 
$[J,J]((X,0),(Y,0))$, and $[J,J]((X,0), (0, \frac{d}{dt}))$:
\begin{align*}
& [J,J]((X,0),(Y,0))  =   ([X,Y],0) + 
   [(\psi X, \tau(X)\frac{d}{dt}), (\psi Y, \tau(Y)\frac{d}{dt})] - \\
& \qquad   J[(\psi X, \tau(X)\frac{d}{dt}), (Y,0)] - 
           J[(X,0), (\psi Y, \tau(Y)\frac{d}{dt})] = \\
& \qquad   (\psi ^2[X,Y]+ \tau([X,Y])\zeta,0) + 
     ([\psi X, \psi Y], (\psi X\tau(Y)-\psi Y\tau(X))\frac{d}{dt}) - \\
& \qquad (\psi[\psi X,Y] -Y\tau(X)\zeta, \tau([\psi X,Y])\frac{d}{dt}) - 
    (\psi [X,\psi Y] +X\tau(Y)\zeta, \tau([X,\psi Y])\frac{d}{dt}) = \\       
& \qquad ([\psi,\psi](X,Y)-2d\tau(X,Y)\zeta, 
  (\mathcal L_{\psi X}\tau(Y)-\mathcal L_{\psi Y}\tau(X))\frac{d}{dt}). 
\end{align*}
Set $K^{(1)}(X,Y) = [\psi,\psi](X,Y)-2d\tau(X,Y)\zeta$, 
$K^{(2)}(X,Y)= \mathcal L_{\psi X}\tau(Y)-\mathcal L_{\psi Y}\tau(X)$. 
Vanishing of $K^{(1)}$, $K^{(2)}$ is necessary for $J$ to be 
paracomplex. Now 
\begin{align*}
  & [J,J]((X,0),(0,\frac{d}{dt}))  =  
    [(\psi X, \tau(X)\frac{d}{dt}), (\zeta,0)]  - 
         J[(X,0), (\zeta,0)] =   \\ 
  & \qquad 
    ([\psi X, \zeta], -\zeta\tau(X)\frac{d}{dt}) -  
         (\psi [X,\zeta], \tau([X,\zeta])\frac{d}{dt}) = 
  -((\mathcal L_\zeta \psi) X, (\mathcal L_\zeta \tau)(X)\frac{d}{dt}). 
\end{align*}
Set $K^{(3)}= \mathcal L_\zeta \psi$, $K^{(4)}= \mathcal L_\zeta \tau$. 
Exactly in the same way as for almost contact  metric manifold it can 
be proven that vanishing of $K^{(1)}$ follows vanishing of $K^{(i)}$, 
$i=2,3,4$,  cf. \cite{Blair}. So we recall well-known  result

\begin{theorem}
Almost paracontact  structure is normal if and only if for Nijenhuis 
torsion $[\psi,\psi]$ we have
\begin{equation}
  [\psi,\psi]-2d\tau\otimes \zeta = 0.
\end{equation}
\end{theorem}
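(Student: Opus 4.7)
The author has already computed $[J,J]((X,0),(Y,0))$ and $[J,J]((X,0),(0,\tfrac{d}{dt}))$, and read off four obstruction tensors $K^{(1)},K^{(2)},K^{(3)},K^{(4)}$. So $J$ is integrable (equivalently, $\mathcal M$ is normal) if and only if all four vanish simultaneously. The task therefore reduces to showing that the single condition $K^{(1)}=0$ already forces $K^{(2)}=K^{(3)}=K^{(4)}=0$. This is the paracontact analogue of the classical Hatakeyama--Sasaki argument in the almost contact case, and I would structure the proof in the same way.

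First I would record the preliminary algebraic identities that follow from $\psi^2=\mathrm{Id}-\tau\otimes\zeta$ and $\tau(\zeta)=1$: namely $\psi\zeta=0$ and $\tau\circ\psi=0$. (Apply $\psi$ to $\psi^2\zeta=\zeta-\zeta=0$, and compute $\tau(\psi^2 X)=\tau(X)-\tau(X)=0$; combined with the dimension/rank discussion these give the two identities.) These will be used repeatedly to project $K^{(1)}$ onto its $\tau$-component and its $\ker\tau$-component.

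Next I would extract $K^{(2)}$ from $K^{(1)}$. Since $\tau\circ\psi=0$, applying $\tau$ to $[\psi,\psi](X,Y)$ kills three of the four Nijenhuis summands, leaving $\tau(K^{(1)}(X,Y))=\tau([\psi X,\psi Y])-2d\tau(X,Y)$. A short Cartan-formula expansion then rewrites this as a linear combination of $d\tau(\psi X,Y)$, $d\tau(X,\psi Y)$ and $K^{(2)}(X,Y)$, so $K^{(1)}=0$ yields an identity relating $K^{(2)}$ to values of $d\tau$; substituting back $\psi X$ for $X$ and using $\psi^2=\mathrm{Id}-\tau\otimes\zeta$ collapses these $d\tau$ terms and produces $K^{(2)}=0$.

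For $K^{(3)}$ and $K^{(4)}$ I would evaluate $K^{(1)}$ on a pair containing the Reeb field. Using $\psi\zeta=0$, one computes $[\psi,\psi](X,\zeta)=[X,\zeta]-\tau([X,\zeta])\zeta-\psi[\psi X,\zeta]$, and via the Lie derivative this rewrites as $-(\mathcal L_\zeta X) +\tau(\mathcal L_\zeta X)\zeta+\psi(\mathcal L_\zeta\psi X)$, i.e.\ essentially $\psi\circ(\mathcal L_\zeta\psi)(X)$ modulo a $\zeta$-part. Splitting $K^{(1)}(X,\zeta)=0$ into its $\ker\tau$ and $\zeta$ components, the former gives $\psi K^{(3)}(X)=0$ and the latter gives $K^{(4)}(X)=0$ after rewriting $2d\tau(X,\zeta)$ as $-(\mathcal L_\zeta\tau)(X)$. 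Finally, applying $\psi$ once more and using $\psi^2=\mathrm{Id}-\tau\otimes\zeta$ together with $K^{(4)}=0$ promotes $\psi K^{(3)}=0$ to $K^{(3)}=0$.

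The main obstacle is not conceptual but bookkeeping: the sign conventions in the paracontact case (with $\psi^2=+\mathrm{Id}-\tau\otimes\zeta$ instead of $-\mathrm{Id}+\eta\otimes\xi$) differ from the contact case, so the Cartan-formula manipulations and the projections onto $\zeta$ and $\ker\tau$ must be tracked carefully to ensure that the cancellations actually produce $K^{(i)}=0$ rather than some signed variant. Once the preliminary identities $\psi\zeta=0$, $\tau\circ\psi=0$ are in hand, everything else is a sequence of Cartan-formula expansions and applications of $\psi^2=\mathrm{Id}-\tau\otimes\zeta$.
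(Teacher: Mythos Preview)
Your proposal is correct and matches the paper's approach exactly; in fact you supply more than the paper does. The paper computes $[J,J]$ on the two types of pairs, identifies the four obstructions $K^{(1)},\ldots,K^{(4)}$, and then simply asserts that ``exactly in the same way as for almost contact metric manifold it can be proven that vanishing of $K^{(1)}$ follows vanishing of $K^{(i)}$, $i=2,3,4$'', referring the reader to Blair's book --- your sketch of the Hatakeyama--Sasaki reduction (extract $K^{(4)}$ and $\psi K^{(3)}$ from $K^{(1)}(\,\cdot\,,\zeta)$, then bootstrap to $K^{(3)}=0$ and $K^{(2)}=0$) is precisely the argument being invoked.
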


Let $(\mathcal M, \psi,\zeta, \tau, g)$ be an almost paracontact  
metric manifold, $\nabla$ - covariant derivative operator with resp. to  
Levi-Civita connection of $g$. 
\begin{proposition}
\label{cov:div:pfi}
  For  almost paracontact  metric  manifold 
\begin{align*}
  & 2g((\nabla_X\psi)Y, Z)  =  
      -3d\Psi(X,\psi Y, \psi Z)-3d\Psi(X,Y,Z) -  
       g(N^{(1)}(Y,Z),\psi X) + \\ 
  & \qquad 
       N^{(2)}(Y,Z)\tau(X) + 2d\tau(\psi Y,X)\tau(Z) - 
       d\tau(\psi Z,X)\tau(Y). 
\end{align*} 
\end{proposition}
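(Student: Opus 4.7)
The plan is to adapt the classical derivation of the almost contact metric identity (\ref{gen:cov:div:fi}) (cf.\ Blair) to the paracontact setting. A preliminary observation is that, despite $\psi^2 = Id - \tau \otimes \zeta$ having the opposite sign from $\phi^2$, the tensor $\psi$ remains skew-symmetric with respect to $g$: from $g(\psi X, \psi Y) = -g(X,Y) + \tau(X)\tau(Y)$ one obtains $g(\psi X, Y) = -g(X, \psi Y)$, equivalently that $\Psi$ is a genuine $2$-form (consistent with $\tau \wedge \Psi^n \neq 0$). This is the single ingredient that lets the almost-contact argument go through essentially verbatim.

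First I would write
\[
2g((\nabla_X\psi)Y,Z) = 2g(\nabla_X(\psi Y), Z) + 2g(\nabla_X Y,\psi Z),
\]
using the skew-symmetry above, and expand each right-hand term via the Koszul formula for the Levi-Civita connection. The resulting expression splits naturally into (i) six directional-derivative terms of the form $X\,g(\psi Y,Z)$, $\psi Y\,g(X,Z)$, $Z\,g(X,\psi Y)$, etc., and (ii) six Lie-bracket terms $g([\,\cdot\,,\,\cdot\,],\,\cdot\,)$ with $\psi$ inserted in various slots.

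Next I would reassemble the directional derivatives using the coboundary formula
\[
3d\Psi(X,Y,Z) = \sum_{\mathrm{cyc}}\bigl(X\Psi(Y,Z) - \Psi([X,Y],Z)\bigr),
\]
applied once with arguments $(X,Y,Z)$ and once with $(X,\psi Y,\psi Z)$. The second invocation is where $\psi^2 = Id - \tau\otimes\zeta$ must be used, producing the leading $-3d\Psi(X,\psi Y,\psi Z)$ together with $\tau$-correction terms (this is the origin of the sign reversal relative to the almost contact case). The surviving Lie-bracket residues assemble into the combination
\[
\psi^2[Y,Z] + [\psi Y,\psi Z] - \psi[\psi Y,Z] - \psi[Y,\psi Z] \;=\; [\psi,\psi](Y,Z),
\]
which, after absorbing the $2d\tau(Y,Z)\zeta$ correction and pairing with $\psi X$, yields precisely $-g(N^{(1)}(Y,Z),\psi X)$ (the minus sign forced by $\psi^2 = +Id$ modulo $\tau\otimes\zeta$, in contrast to $+g(N^{(1)},\phi X)$ in the almost contact case). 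The brackets that pair against $\zeta$ combine, via $\tau(\psi \cdot)=0$ and the definition $N^{(2)}(Y,Z)=(\mathcal L_{\psi Y}\tau)(Z)-(\mathcal L_{\psi Z}\tau)(Y)$, into $N^{(2)}(Y,Z)\tau(X)$ plus the remaining $d\tau$-$\tau$ boundary terms.

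The main obstacle is not conceptual but bookkeeping: the sign reversal from $\psi^2 = +Id$ on $\{\tau = 0\}$ propagates through every step, and each reduction of $\psi^2$ to $Id - \tau\otimes\zeta$ produces an extra $\tau$-correction that must be tracked and eventually collected into the final $d\tau(\psi Y,X)\tau(Z)$ and $d\tau(\psi Z,X)\tau(Y)$ terms. No new idea is required beyond those of the almost contact derivation, but careful sign discipline is essential.
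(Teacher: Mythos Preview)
Your proposal is correct and follows essentially the same route as the paper's own proof: write $2g((\nabla_X\psi)Y,Z)=2g(\nabla_X\psi Y,Z)+2g(\nabla_XY,\psi Z)$, expand both summands with the Koszul formula, regroup via the coboundary formula for $d\Psi$ on the triples $(X,Y,Z)$ and $(X,\psi Y,\psi Z)$, and collect the residual bracket terms into $[\psi,\psi](Y,Z)$ (hence $N^{(1)}$), $N^{(2)}$, and the $d\tau$--$\tau$ boundary terms. One small remark: since $g(\zeta,\psi X)=-g(\psi\zeta,X)=0$, the ``absorbing the $2d\tau(Y,Z)\zeta$ correction'' step is trivial---pairing with $\psi X$ already annihilates the $\zeta$ component, so $g(N^{(1)}(Y,Z),\psi X)=g([\psi,\psi](Y,Z),\psi X)$ directly.
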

\begin{proof}
Recall formula for Levi-Civita connection
\begin{align*}
  & 2g(\nabla_XY,Z) =  
     Xg(Y,Z)+Yg(X,Z)-Zg(X,Y) + g([X,Y],Z)+ \\ 
  & \qquad 
    g([Z,X],Y)+g([Z,Y],X),
\end{align*}
and coboundry formula for exterior derivative $d\Psi$
\begin{align*}
  & 3d\Psi(X,Y,Z)  =  
       X\Psi(Y,Z)+Y\Psi(Z,X)+Z\Psi(X,Y) - \Psi([X,Y],Z)- \\ 
  & \qquad 
       \Psi([Y,Z],X)-\Psi([Z,X],Y).
\end{align*}
Now 
\begin{align*}
  & 2g((\nabla_X\psi)Y,Z) =  
     2g(\nabla_X\psi Y,Z)+2g(\nabla_XY,\psi Z) =   
     Xg(\psi Y, Z)+ \\ 
  & \qquad 
     \psi Y g(X,Z)-Zg(X,\psi Y) +  g([X,\psi Y], Z) + 
     g([Z,X],\psi Y)+ \\ 
  & \qquad 
     g([Z,\psi Y], X) +  Xg(Y,\psi Z) + Yg(X,\psi Z) - 
       \psi Z g(X,Y) + \\
  & \qquad  
     g([X,Y],\psi Z) + g([\psi Z, X],Y) + g([\psi Z, Y],X) =  \\ 
  & \qquad 
     -3d\Psi (X,Y,Z) - 3d\Psi(X,\psi Y, \psi Z) - g([Y,Z],\psi X) + \\
  & \qquad 
     \psi Y (\tau(X)\tau(Z)) + \tau(Z)\tau([X,\psi Y]) + 
       g([Z,\psi Y],X) -  \psi Z (\tau(X)\tau(Y)) + \\ 
  & \qquad 
     \tau(Y)\tau([\psi Z, X]) + g([\psi Z, Y],X) - 
       \Psi([\psi Y, \psi Z],X) = - 3d\Psi(X,Y,Z) - \\ 
  & \qquad 
      3d\Psi (X,\psi Y, \psi Z) - g(\psi^2[Y,Z],\psi X) -  
        g([\psi Y, \psi Z],\psi X) + g(\psi [\psi Y,Z],\psi X) + \\ 
  & \qquad 
      g(\psi [Y,\psi Z], \psi X) - 
       \tau(X)(\tau([\psi Y, Z])  +\tau([Y,\psi Z])) + 
       \psi Y(\tau(X)\tau(Z)) - \\
  & \qquad 
      \psi Z (\tau(X)\tau(Y)) - \tau(Z)\tau([\psi Y,X])  + 
        \tau(Y)\tau([\psi Z, X]) =  \\
  & \qquad  
      -3d\Psi(X,Y,Z) - 3d\Psi(X,\psi Y,\psi Z) - 
         g(N^{(1)}(Y,Z),\psi X) + \\
  & \qquad  
       N^{(2)}(Y,Z)\tau(X) + 2d\tau(\psi Y,X)\tau(Z) - 
         2d\tau(\psi Z,X)\tau(Y).
\end{align*}
\end{proof}

\begin{example}[Para-Sasakian manifold] 
Almost paracontact  metric manifold is paracontact  if $d\tau=\Phi$. 
Normal paracontact  metric manifold is called para-Sasakian. For 
para-Sasakian manifold 
\begin{equation}
  g((\nabla_X\psi)Y,Z) =  d\tau(\psi Y,X)\tau(Z)-d\tau(\psi Z,X)\tau(Y),
\end{equation}
as $d\tau =\Psi$, above identity follows 
$(\nabla_X\psi)Y = -g(X,Y)\zeta +\tau(Y)X$. 
\end{example}

\begin{example}[Para-cosymplectic manifold] 
Almost paracontact  metric manifold  is almost para-cosymplectic, if  
$d\tau =0$, $d\Psi =0$. If additionally is   normal is said to be 
para-cosymplectic. For para-cosymplectic manifold 
$g((\nabla_X\psi)Y,Z)=0$, and 
\begin{equation}
  \nabla \psi =0.
\end{equation}
\end{example}

\begin{example}[Para-Kenmotsu manifold] 
Almost paracontact  metric manifold is called almost para-Kenmotsu if 
$d\tau= 0$, $d\Psi = 2\tau\wedge\Psi$. Normal almost para-Kenmotsu 
manifold is called para-Kenmotsu. For para-Kenmotsu manifold
\begin{align}
  & 3d\Psi(X,Y, Z)  =  2\tau(X)\Psi(Y,Z) + 2\tau(Y)\Psi(Z,X) + 
     2\tau(Z)\Psi(X,Y) , \\
  & 3d\Psi(X,\psi Y, \psi Z)  =  2\tau(X)\Psi(\psi Y, \psi Z) = 
     -2\tau(X)\Psi(Y, Z),
\end{align}
therefore
\begin{align*}
  & g((\nabla_X\psi)Y,Z) =  \Psi(Y,X)\tau(Z)-\Psi(Z,X)\tau(Y),
\end{align*}
and $(\nabla_X\psi)Y = g(\psi X, Y)\zeta -\tau(Y)\psi X$.
\end{example}

\begin{theorem}
For almost paracontact  metric manifold following statements are 
equivalent
\begin{enumerate}
\item
Manifold is $\tau$-normal ;
\item
Manifold is para-CR manifold;
\item
Set $u(X,Y)=d\tau(\psi Y,X) + g(hX,Y)$. The following identity is 
satisfied
\begin{align}
\label{cov:div:crpfi}
  & g((\nabla_X\psi)Y,Z)  = -\frac{3}{2}d\Psi(X,\psi Y,\psi Z) - 
     \frac{3}{2}d\Psi(X,Y,Z) + u(Y,X)\tau(Z) - \\
  & \qquad 
     u(Z,X)\tau(Y). 
     \nonumber  
\end{align} 
\end{enumerate}
\end{theorem}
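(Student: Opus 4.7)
The plan is to mirror the treatment of Theorem \ref{main:acm}, replacing $\phi,\xi,\eta,\Phi,N^{(i)}$ by $\psi,\zeta,\tau,\Psi,K^{(i)}$ and using Proposition \ref{cov:div:pfi} in place of (\ref{gen:cov:div:fi}). I would establish (1)$\Leftrightarrow$(2) by a direct computation of $K^{(1)}$ on the eigendistributions $\mathcal V^{\pm}$, then (1)$\Rightarrow$(3) by the $\bar Y,\bar Z$-substitution argument, and finally (3)$\Rightarrow$(1) by a cyclic $d\Psi$ argument.

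For (1)$\Leftrightarrow$(2), the first observation is that $\psi X=\pm X$ forces $\tau(X)=0$ (since $X=\psi^2 X=X-\tau(X)\zeta$), so $\mathcal V^{\pm}\subset\ker\tau$, and a dimension count with $\mathcal V^+\cap\mathcal V^-=\{0\}$ gives $\ker\tau=\mathcal V^+\oplus\mathcal V^-$. A direct expansion of $K^{(1)}=[\psi,\psi]-2d\tau\otimes\zeta$ yields $K^{(1)}(X^+,Y^-)=0$ automatically for $X^+\in\mathcal V^+$, $Y^-\in\mathcal V^-$ (the $\zeta$-contributions cancel), while for $X,Y\in\mathcal V^+$ one obtains $K^{(1)}(X,Y)=2([X,Y]-\psi[X,Y])$ which vanishes iff $[X,Y]\in\mathcal V^+$; analogously for $\mathcal V^-$. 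Since $K^{(1)}$ is $C^\infty$-bilinear, vanishing on $\ker\tau\times\ker\tau$ is equivalent to involutivity of both $\mathcal V^\pm$, i.e.\ to the para-CR property.

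For (1)$\Rightarrow$(3) I would follow the strategy of Proposition \ref{covd::enor}: set $\bar Y=Y-\tau(Y)\zeta$, $\bar Z=Z-\tau(Z)\zeta$, so that $\psi\bar Y=\psi Y$, $\psi\bar Z=\psi Z$ and $\tau(\bar Y)=\tau(\bar Z)=0$. Under $\tau$-normality a short calculation shows that $K^{(2)}$ also vanishes on $\ker\tau$, so Proposition \ref{cov:div:pfi} at $(X,\bar Y,\bar Z)$ reduces to the exterior-derivative terms alone. Writing the difference $g((\nabla_X\psi)\bar Y,\bar Z)-g((\nabla_X\psi)Y,Z)$ and the corresponding $d\Psi$-difference in terms of $\mathcal L_\zeta\Psi$ and the augmented tensor $h=\tfrac12\mathcal L_\zeta\psi$ (the paracontact counterparts of the $\mathcal L_\xi$ identities, in particular (\ref{lxiFi})), the three identities combine into (\ref{cov:div:crpfi}).

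The converse (3)$\Rightarrow$(1) is the main obstacle. From (\ref{cov:div:crpfi}) and the definition of $h$, I would first derive the paracontact analogue of (\ref{h:anti}) expressing $g(hY,Z)-g(hZ,Y)$ via $\mathcal L_\zeta\Psi$. Inserting (\ref{cov:div:crpfi}) into each summand of the cyclic sum $-3d\Psi(\zeta,Y,Z)=g((\nabla_\zeta\psi)Y,Z)+g((\nabla_Y\psi)Z,\zeta)+g((\nabla_Z\psi)\zeta,Y)$ and applying this antisymmetry identity collapses everything to $d\tau(\psi Y,Z)-d\tau(\psi Z,Y)=0$ on $\ker\tau$, i.e.\ $K^{(2)}(Y,Z)=0$ there. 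Comparison of (\ref{cov:div:crpfi}) with Proposition \ref{cov:div:pfi} then yields $g(K^{(1)}(Y,Z),\psi X)=0$ for every vector field $X$. The delicate closing step, absent in the contact-Riemannian case, is the non-degeneracy argument: $g$ has signature $(n+1,n)$ and the $\mathcal V^{\pm}$ are totally isotropic, so one cannot argue directly. However, $\mathrm{Im}(\psi)=\ker\tau$ and $g$ restricts non-degenerately to $\ker\tau=\zeta^\perp$ since $g(\zeta,\zeta)=1$; thus $g(K^{(1)}(Y,Z),\psi X)=0$ for all $X$ forces $K^{(1)}(Y,Z)\in\mathrm{span}(\zeta)$, while a short check using $K^{(2)}=0$ on $\ker\tau$ together with $\tau\circ\psi=0$ shows $\tau(K^{(1)}(Y,Z))=0$. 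Since $\mathrm{span}(\zeta)\cap\ker\tau=\{0\}$, this gives $K^{(1)}(Y,Z)=0$, completing the argument.
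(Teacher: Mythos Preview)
Your proposal is correct and follows essentially the paper's own approach, which simply states that one ``repeat[s] the same steps'' as in Theorem~\ref{main:acm} and Proposition~\ref{covd::enor}; you have supplied those steps in detail with the appropriate $\phi\to\psi$, $N^{(i)}\to K^{(i)}$ substitutions. The only cosmetic difference is in (1)$\Leftrightarrow$(2): the contact proof in Proposition~1 passes through the complexified condition $[X-i\phi X,Y-i\phi Y]=Z-i\phi Z$, whereas you compute $K^{(1)}$ directly on the real eigenbundles $\mathcal V^{\pm}$, which is the natural adaptation since the para-CR condition is defined via real involutive eigendistributions rather than a complex one; your closing non-degeneracy remark (using $g(\zeta,\zeta)=1$ so that $g|_{\ker\tau}$ is non-degenerate) is a valid observation, though the paper regards this step as posing ``no additional difficulties.''
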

\begin{proof}
Proof goes the same way as proof of {\bf Theorem \ref{main:acm}}. There 
is no additional difficulties here. We repeat the same steps. In the 
first part we prove that manifold is $\tau$-normal iff is para-CR 
manifold. In the second part we prove that (\ref{cov:div:crpfi}) 
characterizes $\tau$-normal manifolds, exactly in the same way as we 
have proven {\bf Proposition \ref{covd::enor}}. 
\end{proof}

\begin{example}[Paracontact para-CR manifolds]
Manifold is paracontact metric if $d\tau = \Psi$. For paracontact 
metric manifold tensor $h$ is symmetric. By (\ref{cov:div:crpfi})
\begin{align}
  & (\nabla_X\psi)Y = -g(X-hX,Y)\zeta +\tau(Y)(X-hX).
\end{align}  
In particular paracontact $(\kappa,\mu)$-space, 
is para-CR manifold, cf. \cite{CPM:Er:Mur}.
\end{example}

\begin{example}[Paracosymplectic para-CR manifolds] 
For paracosymplectic mani\-fold $d\tau=0$, $d\Psi=0$, tensor $h$ is 
symmetric, set  $X \mapsto AX = -\nabla_X\zeta$, $A$ is $(1,1)$-tensor 
field and $h=A\psi = -\psi A$. For paracosymplectic CR-manifold  
\begin{align}
  & (\nabla_X\psi)Y = g(hX,Y)\tau(Z) - \tau(Y)hX = g(A\psi X,Y)\zeta - 
      \tau(Y)A\psi X.
\end{align}
We can state that paracosymplectic manifold is CR-manifold if and only 
if it has para-K\"ahlerian leaves, cf. \cite{D}. 
\end{example}

\begin{example}[Almost para-Kenmotsu para-CR manifolds] 
For almost para-Kenmotsu manifold $d\tau=0$, $d\Psi = 2\tau\wedge\Psi$, 
tensor field $h$ is symmetric. By (\ref{cov:div:crpfi})
\begin{align}
  & g((\nabla_X\psi)Y,Z) = g(\psi X+hX,Y)\zeta -\tau(Y)(\psi X+hX). 
\end{align}
Manifolds is para-CR if and only if it has para-K\"ahlerian leaves.
\end{example}

We can provide similar discussion as in case of almost contact metric 
manifolds. Thus manifold has  autoparallel Reeb field $\zeta$, 
equivalently 
\begin{align}
  h \psi + \psi h = 0 \leftrightarrow \mathcal L_\zeta\tau = 0 
    \leftrightarrow \tau\circ h =0,
\end{align}
and
\begin{align}
\label{nzpsi::as:h}
  & g((\nabla_\zeta\psi)Y,Z) = g(hY,Z)-g(hZ,Y), \\
  & g(hY,Z) - g(hZ,Y) = -\tfrac{1}{2}(\mathcal L_\zeta\Psi)(Y,Z) -
      \tfrac{1}{2}(\mathcal L_\zeta\Psi)(\psi Y,\psi Z).
\end{align}
Again  we introduce augmented tensor $\bar h$, resp. we set tensor 
$\bar u$
\begin{align}
  & \bar h = h - \tfrac{1}{2}\nabla_\zeta\psi, \\
  & \bar u(Y,X) = d\tau(\psi Y, X) + g(\bar hY,X),
\end{align}
tensors $\bar h$, $\bar u$, are symmetric $g(\bar h Y,Z) = g(\bar h 
Z,Y)$, $\bar u(X,Y)=u(Y,X)$ on $\tau$-normal manifold with 
$\nabla_\zeta\zeta =0$, and (\ref{cov:div:crpfi}) 
\begin{align}
\label{covx:psi:y:z}
  &  g((\nabla_X\psi)Y,Z) = -\tfrac{3}{2}d\Psi(X,\psi Y,\psi Z) - 
      \tfarc{3}{2}d\Psi(X,Y,Z) + \bar u(Y,X)\tau(Z) - \\
  & \qquad 
     \bar u(Z,X)\tau(Y) + \tfarc{1}{2}g((\nabla_\zeta\psi)Y,X)\tau(Z) - 
       \tfarc{1}{2}g((\nabla_\zeta\psi)Z,X)\tau(Y), 
    \nonumber
\end{align}

We can state similar proposition to {\bf Proposition 
\ref{prop:acm:par}.} 
\begin{proposition}
Assume almost paracontact metric manifold $\mathcal M$ is a para-CR 
mani\-fold (eq. $\tau$-normal) with autoparallel Reeb vector field. 
There exists linear connection $\tilde\nabla$ on $\mathcal M$, that 
almost paracontact metric structure is parallel with respect to $\tilde 
\nabla$, 
\begin{align}
  & \tilde\nabla\psi = 0, \quad \tilde\nabla\zeta = 0, \quad 
    \tilde\nabla\tau=0,\quad \tilde\nabla g = 0.
\end{align} 
\end{proposition}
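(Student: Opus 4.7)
The strategy is to mirror the construction used in the proof of \textbf{Proposition \ref{prop:acm:par}}, working from the characterization (\ref{covx:psi:y:z}) of the Levi-Civita covariant derivative of $\psi$ on a $\tau$-normal manifold with autoparallel Reeb vector field. The whole argument is a two-step deformation of $\nabla$: first a deformation $T^{(1)}$ chosen to kill $\nabla^{(1)}\psi$, $\nabla^{(1)}\zeta$ and $\nabla^{(1)}\tau$, then a further deformation $T^{(2)}$ chosen to restore metric compatibility without spoiling these three parallelisms.

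First I would introduce auxiliary tensors paralleling those of the contact case: a $(1,2)$-tensor $F_XY$ defined by $g(F_XY,Z)=-\tfrac{3}{2}d\Psi(X,Y,Z)$, the sign being the one dictated by \textbf{Proposition \ref{cov:div:pfi}}, and a $(1,1)$-tensor $B$ defined by $d\tau(X,Y)=g(X,BY)$. On a para-CR manifold the identity $d\tau(\psi Y,X)-d\tau(\psi X,Y)=0$ for $X,Y$ tangent to $\{\tau=0\}$ yields $\psi B=B\psi$, and the hypothesis $h\psi+\psi h=0$ gives $\tau\circ h=0$ and $\nabla_\zeta\zeta=0$. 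Guided by the explicit $T^{(1)}_XY$ from \textbf{Proposition \ref{prop:acm:par}}, I would write down the paracontact analogue, substituting $(\psi,\zeta,\tau,\bar h,\bar u)$ for $(\phi,\xi,\eta,\bar h,\bar u)$ and adjusting signs stemming from $\psi^2=\mathrm{Id}-\tau\otimes\zeta$ and $g(\psi X,\psi Y)=-g(X,Y)+\tau(X)\tau(Y)$. A direct computation modeled on (\ref{tphi})--(\ref{phit}), now using (\ref{nzpsi::as:h}) and the symmetry of $\bar u$ and $\bar h$, will show that $(T^{(1)}_X\psi)Y=(\nabla_X\psi)Y$ and $T^{(1)}_X\zeta=\nabla_X\zeta$, so that $\nabla^{(1)}:=\nabla-T^{(1)}$ satisfies $\nabla^{(1)}\psi=0$, $\nabla^{(1)}\zeta=0$ and consequently $\nabla^{(1)}\tau=0$.

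For the second step I would set $g(T^{(2)}_XY,Z)=-\tfrac{1}{2}(\nabla^{(1)}_Xg)(Y,Z)$ and $\tilde\nabla=\nabla^{(1)}-T^{(2)}$; then $\tilde\nabla g=0$ by construction. What must be checked is that $T^{(2)}$ preserves the remaining structure. From $\nabla^{(1)}\psi=0$ and $g(\psi Y,\psi Z)=-g(Y,Z)+\tau(Y)\tau(Z)$ together with $\nabla^{(1)}\tau=0$ one derives
\begin{align*}
& (\nabla^{(1)}_Xg)(\psi Y,\psi Z)=-(\nabla^{(1)}_Xg)(Y,Z), \\
& (\nabla^{(1)}_Xg)(\psi Y,Z)+(\nabla^{(1)}_Xg)(Y,\psi Z)=0,
\end{align*}
which are precisely the identities that translate into $T^{(2)}_X\psi Y=\psi T^{(2)}_XY$ and $T^{(2)}_X\zeta=0$. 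Hence $\tilde\nabla\psi=0$, $\tilde\nabla\zeta=0$, $\tilde\nabla\tau=0$ and $\tilde\nabla g=0$, as required.

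The main obstacle is not conceptual but bookkeeping: every sign in the contact construction must be re-verified under the paracomplex identities $\psi^2=\mathrm{Id}-\tau\otimes\zeta$ and $g(\psi\cdot,\psi\cdot)=-g+\tau\otimes\tau$, which reverse the sign in $g((\nabla_X\psi)\cdot,\cdot)$ versus $g((\nabla_X\phi)\cdot,\cdot)$ and similarly flip the sign in the metric deformation step. Once the correct signs of $F_XY$, of the $d\Psi$-terms in $T^{(1)}$, and of the identities for $(\nabla^{(1)}_Xg)(\psi Y,\psi Z)$ are consistently tracked, the existence of $\tilde\nabla$ follows exactly as in the almost contact metric case.
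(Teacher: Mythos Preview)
Your proposal is correct and follows essentially the same two-step deformation strategy as the paper's own proof, which explicitly invokes Proposition~\ref{prop:acm:par} and writes down the paracontact $T^{(1)}$ (there denoted $P_XY$ in place of your $F_XY$) before correcting the metric with $T^{(2)}$ defined exactly as you do. Your treatment of the second step is in fact more detailed than the paper's, which omits the verification that $T^{(2)}$ commutes with $\psi$ and annihilates $\zeta$; your sign-adjusted identities $(\nabla^{(1)}_Xg)(\psi Y,\psi Z)=-(\nabla^{(1)}_Xg)(Y,Z)$ and $(\nabla^{(1)}_Xg)(\psi Y,Z)+(\nabla^{(1)}_Xg)(Y,\psi Z)=0$ are precisely what is needed there.
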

\begin{proof}
We repeat the same steps as in the proof of {\bf Proposition 
\ref{prop:acm:par}.} Thus we shall not provide all details, just the 
first deformation tensor field $T^{(1)}$,   
\begin{align}
  & T^{(1)}_XY = \psi P_XY + \tau(X)BY - \tau(Y)(\psi \bar hX-BX) -  \\ 
  & \qquad 
     \tfrac{1}{2}\tau(Y)\psi(\nabla_\zeta\psi)X + 
     (d\tau(Y,X) - \tfrac{1}{2}(\mathcal L_\zeta g)(Y,X))\zeta, 
    \nonumber 
\end{align}
here again $P_XY$, $B$ are defined  as in {\bf Proposition 
\ref{prop:acm:par}}, $g(P_XY, Z) = \tfarc{3}{2}d\Psi(X,Y,Z)$,  $g(X,BY) 
= d\tau(X,Y)$. We find by definition of $T$ and by (\ref{covx:psi:y:z})
\begin{align}
  &   (T_X^{(1)}\psi)Y =  \psi P_X\psi Y - P_XY + \bar u(Y,X)\zeta + 
      \tau(Y)(\bar h -\psi BX) - \\
  & \qquad 
      \tfarc{1}{2}((\nabla_\zeta\psi)X,Y)\zeta + 
        \tfrac{1}{2}\tau(Y)(\nabla_\zeta\psi)X, \nonumber \\
  & (\nabla_X\psi)Y =  \psi P_X\psi Y - P_XY + \bar u(Y,X)\zeta + 
       \tau(Y)(\bar h -\psi BX) - \\
  & \qquad \tfarc{1}{2}((\nabla_\zeta\psi)X,Y)\zeta + 
       \tfrac{1}{2}\tau(Y)(\nabla_\zeta\psi)X, 
     \nonumber
\end{align}
for $\nabla^{(1)}=\nabla-T^{(1)}$,  we have $\nabla^{(1)}\psi=0$,   
$\nabla^{(1)}\zeta =0$, $\nabla^{(1)}\tau = 0$. If $\nabla^{(1)}g \neq 
0$ we rectify this by modifying $\nabla^{(1)}$ with tensor $T^{(2)}$,  
$g(T_X^{(2)}Y,Z) = -\tfrac{1}{2}(\nabla^{(1)}_Xg)(Y,Z)$, $\tilde \nabla 
= \nabla^{(1)}- T^{(2)}$.
\end{proof}

Below we provide list of examples of $\tau$-normal almost paracontact 
metric manifolds.
\begin{example}
	Product of $\tau$-normal manifold and para-Hermitian manifold is 
	again $\tau$-normal almost paracontact metric manifold.
\end{example} 

\begin{example}
	Non-degenerate hypersurface of para-Hermitian manifold is 
	$\tau$-normal almost paracontact metric manifold.
\end{example}

Many results concerning almost contact metric manifolds can be 
restated for almost paracontact metric manifolds. In particular 
set of parallelizations on $\tau$-normal almost paracontact 
metric manifold with autoparallel Reeb field has a structure of 
affine bundle. 

\subsection{Bi-Legendrian manifolds} 
Let $\ml M$ be $\tau$-normal almost paracontact metric  manifold with 
contact characteristic form. In general Reeb vector field $R$ of 
contact form $\tau$ is different from characteristic vector field 
$\zeta$, $R \neq \zeta$. Recall that $R$ is defined by conditions 
$\tau(R) = 1$, $\iota_Rd\tau =0$. For the purpose of this section we 
make assumption that always $R =\zeta$, which greatly simplifies some 
considerations. We have list of equivalent conditions
\begin{itemize}
\item[a)] 
   $R=\zeta$ ; 
\item[b)] 
  $\ml L_\zeta\tau =0$ ;
\item[c)] 
  vector field $\zeta$ is autoparallel $\nabla_\zeta\zeta=0$; 
\item[d)] tensors $h$, $\psi$ anti-commute $h\psi + \psi h =0$.
\end{itemize}
$\tau$-Normal almost paracontact metric manifold $\ml M$ carries 
natural structure of bi-Legendrian manifold:  pair of transversal 
Legendrian foliations $(\ml F_+$, $\ml F_{-})$, is determined by 
eigendistributions of structure tensor $\psi$. In what will follow we 
adopt convention to denote $X^+$ if vector field $X$ satisfies $\psi X 
=X$, similarly $X^-$, if $\psi X^- = -X^-$. With foliations $\ml 
F_{\pm}$, we can associate they Pang invariants \cite{CM}, \cite{Pang}
\begin{align}
  & \Pi_+(X^+,Y^+) = (\ml L_{X^+}\ml L_{Y^+}\tau)(\zeta) = 
     -\tau([[\zeta,X^+],Y^+]), \\ 
  &  \Pi_{-}(X^-,Y^-) = (\ml L_{X^-}\ml L_{Y^-}\tau)(\zeta) = 
    -\tau([[\zeta,X^-],Y^-]),  
\end{align} 
Note as $[[\zeta,X^+],Y] = [(\ml L_\zeta\psi)X^+,Y] + 
  [\psi [\zeta,X^+],Y]$, there is 
\begin{align}
\label{pi2plus}
  & \Pi_+(X^+,Y^+) = -\tau([[\zeta,X^+],Y^+]) = 4d\tau(hX^+,Y^+) + \\
  &    \qquad   2d\tau(\psi[\zeta,X^+],Y^+),
       \nonumber
\end{align}
moreover for para-CR manifold
\footnote{Remind  $R=\zeta$} 
\begin{align}
\label{dtpsi::-pi}
  & 2d\tau(\psi [\zeta, X^+], Y^+) = -2d\tau([\zeta,X^+],Y^+) = 
      -\Pi_{+}(X^+,Y^+),
\end{align}
combining (\ref{pi2plus}), (\ref{dtpsi::-pi}) we obtain
\begin{align}
  & \Pi_+(X^+,Y^+) = 2d\tau(hX^+,Y^+),
\end{align}
providing similar computations for $X^-$, $Y^-$ 
\begin{align}
  & \Pi_{-}(X^-,Y^-) = -2 d\tau(hX^-,Y^-).
\end{align}
Set $\chi(X,Y)=  d\tau(h X,\psi Y)$. Note useful 
identity $d\tau(h X,\psi Y) = -d\tau(\psi h X,Y) = d\tau(h\psi X,Y)$. 
By definition of $\chi$  
\begin{align}
\label{pg:inv}
  & \Pi_{+} = 2\chi|_{\ml V^{+}}, \quad \Pi_{-} = 2\chi |_{\ml V^{-}}.
\end{align}

\begin{theorem} 
Let $\ml M$ be  $\tau$-normal almost paracontact metric para-CR 
manifold, with contact characteristic form and Reeb vector field equal 
to characteristic vector field.  $\ml M$ is flat as 
bi-Legendrian manifold, if and only if is normal.
\end{theorem}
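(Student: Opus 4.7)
The plan is to reduce both implications of the equivalence to the single equality $h=0$, exploiting the Pang-invariant formula (\ref{pg:inv}) together with the identity $h\psi+\psi h=0$ granted by condition (d) of the $R=\zeta$ hypothesis.

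First I would handle the direction flat $\Rightarrow$ normal. Bi-Legendrian flatness means $\Pi_+=\Pi_-=0$, which by (\ref{pg:inv}) amounts to $\chi(X,Y)=d\tau(hX,\psi Y)=0$ for every pair $X,Y$ lying in a common eigendistribution. Since $h$ exchanges $\ml V^+$ and $\ml V^-$ (apply $h\psi+\psi h=0$ to $X\in\ml V^+$ to get $\psi hX=-hX$), for $X,Y\in\ml V^+$ the condition reduces to $d\tau(hX,Y)=0$ with $hX\in\ml V^-$. Now $\tau$ being contact makes $d\tau|_{\ker\tau}$ a symplectic form, and the transverse Legendrian foliations $\ml V^\pm$ realise it as the pairing of two dual Lagrangians of dimension $n$; in particular $d\tau\colon\ml V^-\times\ml V^+\to\mathbb R$ is non-degenerate. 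Hence $h|_{\ml V^+}=0$, and the symmetric argument gives $h|_{\ml V^-}=0$. Together with $h\zeta=0$ (which is automatic from $\psi\zeta=0$) this yields $h\equiv 0$.

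Next I would convert $h=0$ into normality by computing $K^{(1)}$ along $\zeta$. Using $\psi\zeta=0$, the four-term expansion of the Nijenhuis bracket collapses to
\begin{align*}
[\psi,\psi](\zeta,Y)=\psi^2[\zeta,Y]-\psi[\zeta,\psi Y]=-\psi(\ml L_\zeta\psi)Y=-2\psi hY,
\end{align*}
while $\iota_\zeta d\tau=0$ (the equivalent form of $R=\zeta$) makes the $2d\tau\otimes\zeta$ term vanish. Hence $K^{(1)}(\zeta,Y)=-2\psi hY$, which is zero once $h=0$. Combined with the $\tau$-normality hypothesis (which already kills $K^{(1)}$ on $\ker\tau\times\ker\tau$), bilinearity delivers $K^{(1)}\equiv 0$, so $\ml M$ is normal. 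For the converse, assuming $\ml M$ normal, the same identity gives $\psi hY=0$ for all $Y$, so $hY\in\ker\psi=\mathbb R\zeta$; condition (d) provides $\tau\circ h=0$, forcing $hY=0$, hence $\chi\equiv 0$ and $\Pi_\pm=0$, i.e.\ $\ml M$ is bi-Legendrian flat.

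The step I expect to be the main obstacle is a clean justification of the non-degeneracy of $d\tau$ between $\ml V^+$ and $\ml V^-$: one must keep careful track that the contact hypothesis gives a symplectic form on $\ker\tau$, that the eigendistributions are transverse of complementary dimensions $n$, and that each is maximally isotropic, so that the induced pairing is perfect. Everything else is routine manipulation of $(\ml L_\zeta\psi)$ and $[\psi,\psi]$, entirely parallel to what was done for $N^{(1)}$ in the almost contact setting.
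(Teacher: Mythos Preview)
Your argument is correct and rests on the same ingredients as the paper's: the Pang identity (\ref{pg:inv}), the fact that $h$ swaps the eigendistributions (from $h\psi+\psi h=0$), and non-degeneracy of $d\tau$ on $\ker\tau$. The organisation, however, is slightly different and worth noting.

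For the direction flat $\Rightarrow$ normal, the paper first proves that $\chi$ vanishes \emph{globally}: flatness gives $\chi|_{\ml V^\pm}=0$, and the mixed terms $\chi(X^+,Y^-)=-d\tau(hX^+,Y^-)$ vanish automatically because both arguments lie in the Legendrian $\ml V^-$. Only then does the paper invoke contact non-degeneracy to conclude $h=0$. You instead skip the intermediate step and go straight from $\Pi_+=0$ to $h|_{\ml V^+}=0$ by reading $d\tau(hX^+,Y^+)=0$ as the vanishing of the perfect pairing $d\tau:\ml V^-\times\ml V^+\to\mathbb R$ against $hX^+\in\ml V^-$. This is marginally more direct; the paper's route has the small payoff of establishing along the way that the off-diagonal blocks of $\chi$ always vanish, a fact recorded separately in the subsequent Proposition.

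For the equivalence $h=0\Leftrightarrow$ normal (given $\tau$-normality and $R=\zeta$), the paper simply asserts it, relying on the paracontact analogue of the Corollary after Proposition~\ref{covd::enor}. Your explicit computation $K^{(1)}(\zeta,Y)=-2\psi hY$ is a clean self-contained substitute and matches what that corollary encodes.

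The point you flagged as a potential obstacle --- non-degeneracy of $d\tau$ between the two Lagrangians --- is indeed used in both proofs, just at different moments; your justification (contact form, transverse complementary maximal isotropics) is exactly what is needed.
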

\begin{proof}
The proof ends if we will show  that $\chi= 0$, everywhere. 
Assume $\ml M$ is flat.
By (\ref{pg:inv}), we have $\chi |_{\ml V^\pm} =0$. Note 
$ \chi(X^+, Y^-) = -d\tau(hX^+,Y^-) $, from $h\psi + \psi h = 0$, 
tensor $h$ as linear map flips eigendistributions 
\begin{align}
h : \ml V^+ \rightarrow \ml V^-, \quad h : \ml V^- \rightarrow \ml V^+,
\end{align}
hence both $hX^+$, $Y^-$ are sections of Legendrian distribution $\ml 
V^-$, therefore
\begin{align}
  & \chi(X^+, Y^-) =- d\tau(hX^+, Y^-) = 0,
\end{align}  
in the same way we obtain $\chi(X^-, Y^+)=0$. Obviously $\chi(\zeta, 
X)=0$, $\chi(X,\zeta)=0$. We conclude $\chi = 0$ everywhere. Set 
$h'=-\psi h$, as $\tau$ is contact, by $R=\zeta$, we have $h'X = 
\tau(h'X)\zeta = 0$, simple considerations now follows $h=0$, hence 
$\mathcal M$ is normal.  Converse is obvious. If $\ml M$ is normal 
$h=0$, and $\chi=0$.  We now have $\Pi_\pm =0$, by (\ref{pg:inv}), 
therefore $\ml M$ is flat. 
\end{proof}
	
\begin{corollary}
Para-Sasakian manifold is flat as bi-Legendrian manifold.
\end{corollary}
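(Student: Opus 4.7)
The plan is to derive the corollary as a direct application of the preceding theorem by verifying that every para-Sasakian manifold satisfies its hypotheses; once this is checked, bi-Legendrian flatness follows from the \emph{if} direction together with the fact that para-Sasakian manifolds are normal by definition.

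First I would check the structural hypotheses. A para-Sasakian manifold is a normal paracontact metric manifold, so $K^{(1)}=0$ (which entails $\tau$-normality \emph{a fortiori}) and $d\tau=\Psi$. The latter, combined with $\tau\wedge\Psi^n\neq 0$, exhibits $\tau$ as a contact form. By the equivalence theorem for almost paracontact metric manifolds proved earlier, $\tau$-normality means $\ml M$ is a para-CR manifold, so the para-CR hypothesis of the theorem is automatic.

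Second I would verify that the Reeb vector field of $\tau$ coincides with the characteristic vector field $\zeta$, using the list of equivalent conditions at the start of the bi-Legendrian subsection. It suffices to show $\nabla_\zeta\zeta=0$. Starting from the para-Sasakian formula $(\nabla_X\psi)Y=-g(X,Y)\zeta+\tau(Y)X$ established in the earlier example, set $Y=\zeta$: since $\psi\zeta=0$, the left-hand side reduces to $-\psi(\nabla_X\zeta)$, while the right-hand side becomes $X-\tau(X)\zeta$. Applying $\psi$ once more and using $\psi^2=\mathrm{Id}-\tau\otimes\zeta$ together with $\tau(\nabla_X\zeta)=0$ (a consequence of $g(\zeta,\zeta)=1$ being constant), one obtains $\nabla_X\zeta=-\psi X$. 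Specialising to $X=\zeta$ gives $\nabla_\zeta\zeta=0$, so $R=\zeta$.

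With all hypotheses of the preceding theorem verified and normality granted by definition, the \emph{if} direction of the theorem applies: normality forces $h=0$, hence $\chi=0$, and then (\ref{pg:inv}) yields $\Pi_{\pm}=0$, i.e.\ bi-Legendrian flatness. There is no genuine obstacle here — the content of the corollary is entirely absorbed into the preceding theorem, and the only work is the brief unpacking of definitions to confirm that para-Sasakian manifolds sit inside the class to which the theorem applies.
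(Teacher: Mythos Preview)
Your proposal is correct and follows exactly the intended route: the paper states the corollary without proof, relying on the reader to see that a para-Sasakian manifold is normal (hence $\tau$-normal, hence para-CR) with contact $\tau$ and $R=\zeta$, so the preceding theorem applies directly. Your verification of $R=\zeta$ via $\nabla_X\zeta=-\psi X$ is fine, though you could shorten it by noting that normality forces $K^{(4)}=\mathcal L_\zeta\tau=0$, which is item (b) in the paper's list of conditions equivalent to $R=\zeta$.
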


\begin{corollary}
Along the proof we have established that for $\tau$-normal manifold 
which satisfies assumptions of above theorem form $\chi$ is symmetric. 
We see that such manifold has a quite a lot of symmetric invariants, 
ie. symmetric $(0,2)$-tensor fields, metric is evident others are 
$d\tau(\psi X,Y)$, $g(\bar h X,Y)$, and $\chi(X,Y)=d\tau(h X,\psi Y)$.
\end{corollary}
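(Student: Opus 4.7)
The strategy is to assemble the symmetry of $\chi$ from ingredients already established in the proof of the preceding theorem, using the decomposition $TM=\mathbb R\zeta\oplus\ml V^+\oplus\ml V^-$. On the Reeb direction $\chi$ vanishes trivially ($\psi\zeta=0$ and $h\zeta=\tfrac{1}{2}(\ml L_\zeta\psi)\zeta=0$). On the mixed blocks $\ml V^\pm\times\ml V^\mp$, the identity $h\psi+\psi h=0$ (autoparallelism of $\zeta$) sends $X^+\in\ml V^+$ to $hX^+\in\ml V^-$, so that $\chi(X^+,Y^-)=-d\tau(hX^+,Y^-)$ vanishes: both arguments lie in $\ml V^-$, which is involutive (para-CR) and contained in $\ker\tau$, hence $d\tau$ annihilates it. The same handles $\chi(X^-,Y^+)$. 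Thus $\chi$ is trivially symmetric on all these blocks.

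The real work sits on the diagonal blocks $\ml V^\pm\times\ml V^\pm$. By (\ref{pi2plus}) and (\ref{dtpsi::-pi}) we have $\chi|_{\ml V^+}=\tfrac{1}{2}\Pi_+$ and $\chi|_{\ml V^-}=-\tfrac{1}{2}\Pi_-$, so it suffices to verify that the Pang invariants are symmetric. This I would check directly via Jacobi: for $X^+,Y^+\in\ml V^+$,
\[
\Pi_+(X^+,Y^+)-\Pi_+(Y^+,X^+)
=-\tau\bigl([[\zeta,X^+],Y^+]-[[\zeta,Y^+],X^+]\bigr)
=-\tau\bigl([\zeta,[X^+,Y^+]]\bigr).
\]
Now $\ml V^+$ is involutive and in $\ker\tau$, so $\tau([X^+,Y^+])=0$ as a function; combining this with $\ml L_\zeta\tau=0$ (from $R=\zeta$) yields $\tau([\zeta,[X^+,Y^+]])=\zeta\bigl(\tau([X^+,Y^+])\bigr)=0$. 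Symmetry of $\Pi_-$ is analogous. Hence $\chi$ is symmetric everywhere.

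The other listed symmetries are essentially packaged in earlier statements. Symmetry of $g(\bar h X,Y)$ is the reason the augmented tensor $\bar h=h-\tfrac{1}{2}\nabla_\zeta\psi$ was introduced on manifolds with $\nabla_\zeta\zeta=0$, as recorded in the discussion preceding (\ref{covx:psi:y:z}). Symmetry of $d\tau(\psi X,Y)$ then follows by subtraction from the globally symmetric form $\bar u(Y,X)=d\tau(\psi Y,X)+g(\bar h Y,X)$ on $\tau$-normal manifolds with autoparallel Reeb field. Thus the only genuine obstacle is the short Jacobi computation for $\Pi_\pm$; everything else is assembly of facts already in place.
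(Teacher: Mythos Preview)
Your approach is correct and matches the paper's implicit argument: the corollary points back to facts assembled in the surrounding discussion, namely the block decomposition $T\ml M=\mathbb R\zeta\oplus\ml V^+\oplus\ml V^-$, the vanishing of $\chi$ on the Reeb direction and on the mixed blocks (via $h\psi+\psi h=0$ and the Legendrian property), and the identification $\Pi_\pm=2\chi|_{\ml V^\pm}$ from (\ref{pg:inv}). Your explicit Jacobi verification of the symmetry of $\Pi_\pm$ is a welcome addition, since the paper simply takes the symmetry of the Pang invariant as a known property of Legendrian foliations; note only the harmless sign slip $\chi|_{\ml V^-}=\tfrac{1}{2}\Pi_-$, not $-\tfrac{1}{2}\Pi_-$, in your text.
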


In following lemma we state basic properties of tensor field $\chi$.
\begin{proposition}
Let $\ml M$ be almost paracontact metric manifold with contact 
characteristic form and parallel characteristic vector field. Set 
$\chi(X,Y) = d\tau(hX,\psi Y)$. The tensor field $\chi$ satisfies 
following identities
\begin{align}
  & \chi(X,Y) = \chi(Y,X), \\
  \label{chi:dtau}
  & \chi(X,Y) = -d\tau(\psi hX,Y) = d\tau(h\psi X,Y), \\
  & \chi(X^+,Y^-) = 0, \quad \text{in words} \; \ml V^+, 
        \ml V^-  \; \text{are $\chi$-orthogonal}, \\
  & \chi|_{\ml V^{+}} = \tfarc{1}{2}\Pi_{+}, \quad 
    \chi|_{\ml V^{-}} = \tfarc{1}{2}\Pi_{-}, 
\end{align}
\end{proposition}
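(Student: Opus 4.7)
The plan rests on decomposing any vector field along $TM = \mathbb R\zeta \oplus \ml V^+ \oplus \ml V^-$, together with three standing facts. First, from the parallel Reeb hypothesis we have $h\psi + \psi h = 0$ and $\tau\circ h = 0$ (so in particular $h\zeta = 0$, and since $\psi(hX^\pm) = -h\psi X^\pm = \mp hX^\pm$, the endomorphism $h$ interchanges eigendistributions, $h\ml V^\pm \subset \ml V^\mp$). Second, the para-CR hypothesis in force throughout the subsection gives $d\tau(\ml V^\pm, \ml V^\pm) = 0$, because $\tau|_{\ml V^\pm} \equiv 0$ and each $\ml V^\pm$ is involutive. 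Third, $R = \zeta$ gives $\iota_\zeta d\tau = 0$.

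I would first dispatch (\ref{chi:dtau}). The second equality $d\tau(h\psi X, Y) = -d\tau(\psi hX, Y)$ is immediate from $h\psi = -\psi h$. For the first equality I would establish an auxiliary identity $d\tau(\psi Z, Y) = -d\tau(Z, \psi Y)$ valid for any $Z \in \ker\tau$: after decomposing $Z = Z^+ + Z^-$ and $Y = Y^+ + Y^- + \tau(Y)\zeta$, the four resulting cases reduce either to $\iota_\zeta d\tau = 0$ or to vanishing of $d\tau$ on $\ml V^\pm \otimes \ml V^\pm$. Applying this with $Z = hX$, which lies in $\ker\tau$ thanks to $\tau\circ h = 0$, yields $d\tau(hX, \psi Y) = -d\tau(\psi hX, Y)$. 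The orthogonality $\chi(X^+, Y^-) = 0$ then falls out: $\chi(X^+, Y^-) = d\tau(hX^+, \psi Y^-) = -d\tau(hX^+, Y^-) = 0$, since both $hX^+$ and $Y^-$ lie in $\ml V^-$. Finally, $\chi|_{\ml V^\pm} = \tfrac12\Pi_\pm$ is exactly what was computed in (\ref{pi2plus})--(\ref{dtpsi::-pi}) preceding the theorem, so no further work is needed there.

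For the symmetry $\chi(X,Y) = \chi(Y,X)$, I would decompose both arguments. The $\zeta$-contributions drop out because $\psi\zeta = h\zeta = 0$, the mixed $\ml V^+ / \ml V^-$ pieces vanish by the orthogonality just proved, and on the diagonal pieces one has $\chi(X^\pm, Y^\pm) = \tfrac12\Pi_\pm(X^\pm, Y^\pm)$. Symmetry of $\chi$ therefore reduces to symmetry of the Pang invariants, and this is where the one substantive step sits. I would obtain $\Pi_\pm(X^\pm, Y^\pm) = \Pi_\pm(Y^\pm, X^\pm)$ by applying the Jacobi identity to $\zeta, X^\pm, Y^\pm$; the cross term $\tau([\zeta,[X^\pm, Y^\pm]])$ vanishes because $\ml L_\zeta\tau = 0$ reduces it to $\zeta\,\tau([X^\pm, Y^\pm])$, and involutivity of $\ml V^\pm$ together with $\tau|_{\ml V^\pm} = 0$ then forces this to be zero. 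This Jacobi-plus-integrability step is the main obstacle; everything else amounts to bookkeeping with the decomposition.
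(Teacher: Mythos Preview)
Your proposal is correct. The paper does not give a self-contained proof of this proposition: the identity (\ref{chi:dtau}) is simply asserted as a ``useful identity'' just before the preceding Theorem, the relations $\chi|_{\ml V^{\pm}}=\tfrac12\Pi_{\pm}$ are exactly (\ref{pg:inv}), and the $\chi$-orthogonality of $\ml V^{+}$ and $\ml V^{-}$ is shown inside the proof of that Theorem via the same observation you make (that $h$ swaps the eigendistributions, so $hX^{+},Y^{-}\in\ml V^{-}$ and $d\tau$ vanishes on a Legendrian distribution). Your argument reconstructs all of this in the same spirit; the one place you do a little more than the paper is the symmetry of $\chi$, where the paper just records in the subsequent Corollary that symmetry was ``established along the proof'' and implicitly relies on the known symmetry of Pang invariants from \cite{Pang}, whereas you supply a direct Jacobi-identity verification of $\Pi_{\pm}(X^{\pm},Y^{\pm})=\Pi_{\pm}(Y^{\pm},X^{\pm})$ using $\ml L_{\zeta}\tau=0$ and involutivity of $\ml V^{\pm}$.
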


Let consider case where one of $\Pi_{+}$ or $\Pi_{-}$, vanishes. Such
bi-Legendrian manifold we call semi-flat. On base of established 
properties of form $\chi$ we can state following proposition.
\begin{theorem}
	Let $\ml M$, be almost paracontact metric manifold with contact 
	characteristic form, autoparallel characteristic vector field. 
	$\ml M$ is semi-flat if and only if $h^{2}=0$.
\end{theorem}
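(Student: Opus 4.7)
The plan is to translate semi-flatness into a pointwise condition on $h$ via the characterizations established in the preceding proposition, and then to compare it with $h^{2}=0$. Since $h$ anti-commutes with $\psi$ and annihilates $\zeta$, it interchanges the eigendistributions $\mathcal V^{+}$ and $\mathcal V^{-}$; writing $h_{\pm}=h|_{\mathcal V^{\pm}}:\mathcal V^{\pm}\to\mathcal V^{\mp}$, one sees that $h^{2}$ preserves each $\mathcal V^{\pm}$ and acts there as $h_{\mp}\circ h_{\pm}$, so $h^{2}=0$ is equivalent to the pair of relations $h_{-}h_{+}=0$ and $h_{+}h_{-}=0$.

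On the bi-Legendrian side, from (\ref{pg:inv}) combined with $\chi(X,Y)=d\tau(hX,\psi Y)$ one gets $\Pi_{+}(X^{+},Y^{+})=2d\tau(h_{+}X^{+},Y^{+})$. Since $\tau$ is a contact form and the Legendrian subbundles $\mathcal V^{\pm}$ are Lagrangian for the symplectic pairing $d\tau$ on $\ker\tau$, the induced pairing $d\tau:\mathcal V^{-}\otimes\mathcal V^{+}\to\mathbb R$ is perfect. Consequently $\Pi_{+}=0$ iff $h_{+}=0$, and similarly $\Pi_{-}=0$ iff $h_{-}=0$, so that semi-flatness of $\mathcal M$ is precisely the disjunction ``$h_{+}=0$ or $h_{-}=0$''. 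This immediately gives the easy implication semi-flat $\Rightarrow h^{2}=0$, since the vanishing of either $h_{\pm}$ trivially makes both compositions $h_{-}h_{+}$ and $h_{+}h_{-}$ zero.

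The converse $h^{2}=0\Rightarrow$ semi-flat is the crux. Given $h_{-}h_{+}=0$ and $h_{+}h_{-}=0$, one wants to conclude that one of $h_{+}$, $h_{-}$ is identically zero. Via the $d\tau$-identification $\mathcal V^{-}\cong(\mathcal V^{+})^{*}$, the symmetry of $\chi|_{\mathcal V^{\pm}}$ (established in the preceding proposition) translates into symmetry of the bilinear forms associated with $h_{+}$ and $h_{-}$; one then invokes $\tau$-normality (integrability of $\mathcal V^{\pm}$) together with the fact that $h_{+}$ and $h_{-}$ are not independent data but the two off-diagonal blocks of the single operator $\tfrac{1}{2}\mathcal L_{\zeta}\psi$ to exclude the coexistence of non-trivial $h_{+}$ and $h_{-}$ satisfying the mutually vanishing composition relations. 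This is the main obstacle of the proof: purely at the level of symmetric matrices, pairs $(A,B)$ with $AB=BA=0$ need not satisfy $A=0$ or $B=0$ (take, e.g., $A=\mathrm{diag}(1,0)$ and $B=\mathrm{diag}(0,1)$), so the dichotomy must be forced by the geometric rigidity of $h$ as a Lie-derivative-type tensor on a $\tau$-normal almost paracontact metric manifold with contact characteristic form and autoparallel Reeb field, rather than by abstract linear algebra alone.
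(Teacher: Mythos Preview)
Your treatment of the implication ``semi-flat $\Rightarrow h^{2}=0$'' is correct and in fact sharper than the paper's. You observe directly that, since $d\tau$ pairs the Lagrangian subbundles $\mathcal V^{+}$ and $\mathcal V^{-}$ perfectly, $\Pi_{\pm}=0$ is equivalent to $h_{\pm}=0$; hence semi-flatness is exactly the disjunction ``$h_{+}=0$ or $h_{-}=0$'', and $h^{2}=0$ follows at once. The paper does not isolate this equivalence; instead it shows, under the hypothesis $\Pi_{+}=0$, that $\chi(X,hY)=0$ for all $X,Y$ via the $\chi$-orthogonal decomposition and a chain of manipulations, and then reads off $h^{2}=0$ from the non-degeneracy of $d\tau$. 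Your route bypasses that computation entirely.

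For the converse ``$h^{2}=0\Rightarrow$ semi-flat'' you correctly locate the difficulty and give the right linear-algebraic obstruction: from $h_{-}h_{+}=h_{+}h_{-}=0$ one cannot conclude $h_{+}=0$ or $h_{-}=0$ by algebra alone. However, your final paragraph is not a proof --- the phrase ``geometric rigidity of $h$ as a Lie-derivative-type tensor'' names a hope, not a mechanism, and you do not exhibit any concrete identity that would rule out the coexistence of nonzero $h_{+}$ and $h_{-}$ with vanishing compositions. On this point you are in the same position as the paper: its proof treats only the direction semi-flat $\Rightarrow h^{2}=0$ and is silent on the converse. So if the statement is to be read as a genuine equivalence, an argument for $h^{2}=0\Rightarrow$ semi-flat is still missing in both your proposal and the paper; your counterexample shows that any such argument must invoke structure beyond the anti-commutation $h\psi+\psi h=0$ and the bilinear identities for $\chi$.
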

\begin{proof}
We already know that Legendrian distributions $\ml V^+$ and $\ml V^-$,
are $\chi$-orthogonal. From this $\chi$-orthogonal decomposition follows 
that if $\Pi_+ = 2\chi |_{\ml V^+}=0$, or $\Pi_-=2\chi |_{\ml V^-}=0$, 
then kernel of $\chi$ is non-trivial it contains corresponding flat 
distribution.  Consider $\chi(X, hY)$. We will show that $\chi(X, hY)=0$, 
for every vector fields $X$, $Y$ if $\ml M$ is semi-flat. Then by symmetry of 
$\chi$ we find 
\begin{align}
  & \chi(X,hY) = \chi(hY,X) = -d\tau(\psi h^2X,Y)=0, 
\end{align}
from the last equation
we easily obtain $h^2=0$. Note for vector fields $X$, $Y$, such that 
$\tau(X) =\tau(Y)=0$, there are unique decompositions  
$X=X^+ +X^-$, $Y=Y^++Y^-$, hence
\begin{align}
  & \chi(X,hY) = \chi(X^+, (hY)^+) + \chi(X^-, (hY)^-)  = \\ 
  & \qquad  \chi(X^+, hY^-)+ \chi(X^-,hY^+),
  \nonumber    
\end{align}
here we have used $(hY)^+ = hY^-$, and $(hY)^- = h Y^+$, which 
comes from the fact that $h$ and $\psi$ anti-commute $\psi h+h\psi =0$. 
To fix attention let $\Pi_+ =0$. Then above identity simplifies to 
$\chi(X,hY)=\chi(X^-, hY^+)$. From (\ref{chi:dtau}), 
\begin{align}
  & \chi(X^-, hY^+) = -d\tau(\psi h X^-, hY^+) = 
  -d\tau(\psi hY^+, hX^-) = \\ 
  & \qquad \chi(hX^-, Y^+) = 0, \nonumber
\end{align}
as $Y^{+}$ belongs to kernel of $\chi$. Thus $\chi(X,hY) = 0$, for 
every vector fields $X$, $Y$. 
\end{proof}

For  paracontact metric $(\kappa,\mu)$-space, curvature $R_{XY}\zeta$, 
satisfies   
\begin{align}
	R_{XY}\zeta = \kappa (\tau(Y)X - \tau(X)Y) + 
	  \mu(\tau(Y)hX - \tau(X)hY),
\end{align}
and $h^{2}X=(1+\kappa)(X-\tau(X)\zeta)$. In case $\kappa = -1$, 
$h^{2}=0$. 
\begin{corollary}
	Non-para-Sasakian paracontact metric $(-1,\mu)$-space is non-flat, 
	semi-flat bi-Legendrian manifold.
\end{corollary}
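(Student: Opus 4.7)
The plan is to chain together the two preceding theorems (the flat-iff-normal characterization and the semi-flat-iff-$h^2=0$ characterization) with the explicit formula for $h^2$ on a paracontact metric $(\kappa,\mu)$-space stated just above the corollary. No new calculation is really required; everything follows by specialization.

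First I would check that a paracontact metric $(-1,\mu)$-space satisfies the standing hypotheses of the bi-Legendrian theorems above, namely that the characteristic form $\tau$ is contact (this is immediate, as paracontact metric means $d\tau = \Psi$) and that the Reeb vector field of $\tau$ coincides with the characteristic vector field $\zeta$ (which is the usual convention for paracontact metric $(\kappa,\mu)$-spaces, so $\nabla_\zeta \zeta = 0$ and $h\psi + \psi h = 0$ hold by the equivalence listed in the definition of the bi-Legendrian setting).

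Next I would substitute $\kappa = -1$ into the identity $h^2 X = (1+\kappa)(X - \tau(X)\zeta)$ recalled just before the corollary, which immediately gives $h^2 = 0$. The preceding theorem then implies that $\mathcal{M}$ is semi-flat as a bi-Legendrian manifold. For the non-flatness, I would invoke the earlier theorem characterizing bi-Legendrian flatness: $\mathcal{M}$ is flat if and only if it is normal, and in the paracontact metric setting normality is equivalent to being para-Sasakian. Since we assume $\mathcal{M}$ is non-para-Sasakian, $\mathcal{M}$ is not normal and therefore not flat.

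The main point that requires care is the verification that paracontact metric $(-1,\mu)$-spaces satisfy $R=\zeta$ (and consequently the equivalences listed in the bi-Legendrian subsection apply); once this is acknowledged, the corollary is a direct one-line assembly of the two preceding theorems via the formula for $h^2$.
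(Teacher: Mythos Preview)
Your proposal is correct and matches the paper's intended argument: the paper states the corollary without proof, precisely because it follows by combining the just-proved semi-flat characterization ($h^2=0$), the earlier flat-iff-normal theorem, and the formula $h^2X=(1+\kappa)(X-\tau(X)\zeta)$ specialized to $\kappa=-1$. Your care in checking $R=\zeta$ and the para-CR property is appropriate; both hold for paracontact metric manifolds (since $d\tau=\Psi$ and $\iota_\zeta\Psi=0$), and the paper has already noted that paracontact $(\kappa,\mu)$-spaces are para-CR.
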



%
%

%
%
%
%
%
%
%


\end{document}